\documentclass[11pt,reqno]{amsart}
\usepackage{amsmath, amssymb,fullpage}
\usepackage{paralist}
\usepackage{mathrsfs,xcolor}
 \usepackage[colorlinks=true]{hyperref}
\hypersetup{urlcolor=blue, citecolor=blue, linkcolor=blue}

\usepackage{amsfonts}
\usepackage{graphicx}
\usepackage{epstopdf}
\usepackage{algorithmic}
\usepackage{verbatim}
\ifpdf
  \DeclareGraphicsExtensions{.eps,.pdf,.png,.jpg}
\else
  \DeclareGraphicsExtensions{.eps}
\fi

\newtheorem{theorem}{Theorem}

\newtheorem{lemma}[theorem]{Lemma}

\theoremstyle{definition}

\newtheorem{remark}{Remark}

\title[Singular Limits of SWE on the Sphere]{Singular Limits of the\\ Shallow Water
  Equations on the Sphere}

\author{\bf Bin Cheng}
\address{School of Mathematics and Physics, University of Surrey, Guildford, GU2 7XH, United Kingdom}
\email{b.cheng@surrey.ac.uk}

\author{\bf Steve Schochet}
\address{School of Mathematical Sciences, Tel-Aviv University, Tel Aviv 69978, Israel}
  \email{schochet@tauex.tau.ac.il}

\usepackage{amsopn}

\newcommand{\myvec}[1]{\mathbf{#1}}
\newcommand{\tfr}[1]{\tfrac 1{#1}}
\newcommand{\prt}{\partial}
\newcommand{\eqdef}{\mathrel{:=}}
\newcommand{\grad}{\nabla}
\DeclareMathOperator{\curl}{curl}
\let\div\undefined
\DeclareMathOperator{\div}{div}
\newcommand{\eps}{\varepsilon}
\newcommand{\del}{\delta}
\newcommand{\lp}{\left(}\newcommand{\rp}{\right)}
\newcommand{\lb}{\left[}\newcommand{\rb}{\right]}
\newcommand{\ddt}{\tfrac{d\hfil}{dt}}
\newcommand{\ppp}[2]{\frac{\prt #1\hfill}{\prt #2\hfill}}

\newcommand{\tppp}[2]{\tfrac{\prt #1\hfill}{\prt #2\hfill}}
\newcommand{\tpppp}[2]{\tfrac{\prt^2 #1\hfil}{\prt #2^2\hfill}}
\newcommand{\eval}[1]{{\big|_{#1}}}
\newcommand{\ip}[2]{\left\langle {#1},{#2}\right\rangle}
\newcommand{\?}{\kern-1pt}

\DeclareMathOperator{\Av}{A\?v}
\DeclareMathOperator{\Avphi}{A\?v_{\!\phi}}
\newenvironment{extra}{\comment}{\endcomment}
\newif\ifextras\extrasfalse
\ifextras\fi
\ifpdf
\hypersetup{
  pdftitle={Singular Limits of the Shallow Water
  Equations on the Sphere},
  pdfauthor={Bin Cheng and Steve Schochet}
}
\fi

\def\cref{\eqref}
\begin{document}

\maketitle

\begin{abstract}
  Solutions of the slightly compressible shallow water equations on a rapidly rotating sphere are shown to be bounded uniformly when ratio of the
  Froude number to the Rossby number is bounded.  Moreover, in the singular limit in which the ratio of those parameters remains fixed while they both
  tend to zero, solutions with well-prepared initial data tend to corresponding solutions of limit equations. A convergence result is also obtained
  for the three-scale singular limit in which the Froude number tends to zero faster than the Rossby number.
\end{abstract}

\keywords{
Keywords. shallow water equations, rotating sphere, singular limit, uniform bounds, variable-coefficient large operator, three-scale singular limit

MSC codes. 35B25, 35L45, 35Q31, 35Q86, 35R01, 58J45, 76U05
}


\section{Introduction}
In this paper we obtain bounds on solutions of the slightly compressible shallow water equations on a rapidly rotating sphere that are uniform when
the ratio of the Froude number to the Rossby number is bounded.  Similar uniform bounds were obtained for solutions of the incompressible Euler
equations on a rapidly rotating sphere in \cite{MR2998607}, and for solutions of the compressible equatorial $\beta$-plane approximation in
\cite{MR2219357,MR2372481,MR2487851, MR2424189}, which involve only one small parameter. The difficulty in all these problems is that the large terms
have variable coefficients, so the general theory for singular limits of systems having constant-coefficient large terms
\cite{MR615627,MR1303036,MR1459589} is not directly applicable. We follow the approach of \cite{MR2487851} of looking for a modified differential
operator that commutes with the operator appearing in the large terms. Other papers that use that approach include \cite{MR3305659,MR3196189}.

In addition, we show that when the ratio of the Froude number to the Rossby number remains fixed while they both tend to zero then solutions of the
slightly compressible shallow water equations on a rapidly rotating sphere converge to corresponding solutions of limit equations provided that the
initial data is well prepared in the sense that initial time derivative of the solution is uniformly bounded. As for classical singular limits, our
uniform existence result does not require that the initial data be well-prepared. These results are among the few that have been obtained
(\cite{MR2998607,MR3478875,MR3315495} for singular limits on a curved manifold such as the sphere.

The analogous convergence result for the classical
case of constant-coefficient large terms was proven in \cite{MR615627}. In the classical case it has been shown in
\cite{MR1305424,MR1303036,MR1459589} that even when the well-preparedness condition does not hold then after filtering the solutions to remove the
fast oscillations the result tends to solutions of a profile equation, and the corresponding result was shown for the equatorial beta-plane in
\cite{MR2424189,MR3017989}. Similar convergence results were proven in \cite{MR2998607} for the incompressible Euler equations on a rapidly rotating
sphere by taking a time average instead of filtering, using the framework developed in \cite{MR3039689}. Although an abstract version of such a
theorem could be obtained here for the slightly compressible shallow water equations on a rapidly rotating sphere, a truly useful concrete version
would require determining the spectral decomposition of the variable-coefficient large operator~$L$ in \eqref{eq:L} below. 

Furthermore, if instead the ratio of Froude number to the square of the Rossby number remains fixed as those parameters tend to zero, and appropriate
conditions on the initial data hold, then solutions of the slightly compressible shallow water equations on a rapidly rotating sphere again converge to
corresponding solutions of limit equations. As far as we know this is the first result for such a three-scale singular limit
\cite{MR3803773} in which the large terms have variable coefficients.

As in \cite{MR2998607,MR2219357,MR2372481,MR2487851, MR2424189}, the limit solutions here describe zonal flow, whose nature and importance are
described in \cite{MR2998607} and its references.  Due to the specific structure of the equations, the limits obtained here in both the two-scale and
three-scale cases are stationary, unlike the limit in the multi-scale beta-plane \cite{MR2487851} and most other two-scale and three-scale singular
limits. The results of \cite{MR2998607} have been generalized in different ways in \cite{MR3478875,MR3315495}. It would be interesting to try to show
analogous generalizations of the results here.

A precise description of our results is given in \cref{sec:results}. In \cref{sec:operators} the operator obtained from the large terms in the
equations is analyzed, and a differential operator that commutes with that operator is presented.  The uniform boundedness result is proven in
\cref{sec:unif}, and the convergence results are proven in \cref{sec:conv}. An appendix reviews the differential geometry concepts and
notations used throughout the paper.

\section{Equations and Theorem}
\label{sec:results}

The scaled slightly compressible shallow water equations on a rapidly rotating sphere are
\begin{equation}
  \label{eq:euler}
\begin{aligned}
 \prt_t \myvec u+\grad_{\?\myvec u}\myvec u&=-\tfr\del \grad h+\tfrac{z}{\eps}\myvec u^\perp,
\\
\prt_t h+\grad_{\?\myvec u}  h+h\div \myvec u&=-\tfr\del\div \myvec u,
\end{aligned}
\end{equation}
where $\myvec u$ is a vector field \eqref{eq:ucoords} on the sphere, $\grad_{\?\myvec u}$ denotes the covariant derivative \eqref{eq:vgradf},
\eqref{eq:covderiv}, $\myvec u^\perp$ denotes the perpendicular vector field defined by \eqref{eq:uperp}, $z$ is the coordinate in the direction of
the axis of rotation of the sphere \eqref{eq:polar}, and the positive parameters $\del$ and $\eps$ are the Froude number and the Rossby number,
respectively. System~\eqref{eq:euler} differs from the more familiar equatorial beta-plane shallow water equations
(e.g. \cite[(1.1)\textendash(1.2)]{MR2487851}) only in that the covariant derivative $\nabla_{\myvec u}$ replaces the cartesian convective derivative
$\myvec u\cdot\nabla$, the height $z$ on the sphere replaces the height~$y$ in the beta-plane, the divergence, gradient, and perpendicular of a vector
are taken in the appropriate sense for the sphere rather than for the plane, and separate parameters are used for the Froude and Rossby numbers.
Using the coordinates~$\myvec u=\lp\begin{smallmatrix}u_\phi\\u_\theta\end{smallmatrix}\rp$ from \eqref{eq:ucoords} along with the
formulas~\eqref{eq:vgradf} and \eqref{eq:covderiv} for the derivative operator $\grad_{\myvec v}$, the spherical coordinate formula~\eqref{eq:polar},
and formulas~\eqref{eq:uperp} for the perpendicular of a vector, \eqref{eq:div} for the divergence, and \eqref{eq:grad} for the gradient,
\eqref{eq:euler} can be written as
\begin{equation}\label{eq:euler-local}
  \begin{aligned}
    \prt_t &u_\phi+u_\phi\tppp{u_\phi}{\phi}+u_\theta\tppp{u_\phi}{\theta}+2u_\phi u_\theta\cot\?\theta 
=-\tfr\del \tfrac{\ppp{h}{\phi}}{\sin^2\!\theta}+\tfr\eps\tfrac{\cos\?\theta}{\sin\?\theta}\,u_\theta,
\\
    \prt_t &u_\theta+u_\phi\tppp{u_\theta}{\phi}+u_\theta \tppp{u_\theta}{\theta}-u_\phi^2\sin\?\theta\cos\?\theta 
=-\tfr\del \tppp{h}{\theta}-\tfr\eps \cos\?\theta\sin\?\theta\,u_\phi,
\\
    \prt_t &h+u_\phi \tppp{h}{\phi}+u_\theta \tppp{h}{\theta}+h\! \lp \tppp{u_\phi}{\phi}+\tfr{\sin\?\theta}\prt_\theta (\sin\?\theta\,u_\theta)\rp
=-\tfr\del\! \lp \tppp{u_\phi}{\phi}+\tfr{\sin\?\theta}\prt_\theta (\sin\?\theta\,u_\theta)\rp.
  \end{aligned}
\end{equation}

\paragraph{Spaces and norms} The space $L^2(\mathbb S^2)$ is the space of pairs $\lp\begin{smallmatrix}\myvec u\\h\end{smallmatrix}\rp$ of vector
fields and scalar fields for which the norm
\begin{equation*}
  \left\|\lp\begin{smallmatrix}\myvec u\\h\end{smallmatrix}\rp\right\|_{L^2}
\eqdef\sqrt{\ip{\lp\begin{smallmatrix}\myvec u\\h\end{smallmatrix}\rp}{\lp\begin{smallmatrix}\myvec u\\h\end{smallmatrix}\rp}}
\end{equation*}
is finite, where the inner product $\ip{\,}{}$ on the sphere is defined in \eqref{eq:ip}. To define Sobolev norms we need the operator
\cite[(2.1.6)]{MR3478875}
\begin{equation*}
  A\eqdef \lp\begin{smallmatrix}(-\Delta)^{\frac12}&0\\0&(-\Delta)^{\frac12}\end{smallmatrix}\rp,
\end{equation*}
where $\Delta$ in the upper diagonal spot is the Laplacian on vector fields, and $\Delta$ in the lower diagonal spot is the Laplacian on functions.
While \cite{MR3478875}, which does not have the scalar component since it treats the incompressible equations, defines $A$ in terms of the Hodge
Laplacian on one-forms, the definitions are equivalent in view of
\eqref{eq:lap=hodge:f}\textendash\eqref{eq:lap=hodge:u}.  The $H^s$ norm can then be defined by
\begin{equation*}
  \left\|\lp\begin{smallmatrix}\myvec u\\h\end{smallmatrix}\rp\right\|_{H^s}
\eqdef \left\|(I+A^s) \lp\begin{smallmatrix}\myvec u\\ h\end{smallmatrix}\rp\right\|_{L^2},
\end{equation*}
where $I$ denotes the identity operator. The inclusion of $I$ is necessary for the scalar component since constant functions are harmonic, but it is
not really needed for the vector-field component since there are no nonzero harmonic vector fields on the sphere, and \cite{MR3478875} omits it.
Since the sphere is a compact Riemannian manifold, the usual results for Sobolev spaces on $\mathbb R^d$
remain valid (\cite{MR2389638}, \cite{MR3478875}, and references therein); specific estimates will be cited where they are used.

\begin{theorem}[Uniform Existence and Boundedness]\label{thm:unif}
For any finite positive constants $C$ and $K$ there exist finite positive numbers $T(C,K)$ and $B(C,K)$ such that if
\begin{equation}\label{eq:initbnd}
\left\|\lp\begin{smallmatrix}\myvec u_0\\h_0\end{smallmatrix}\rp\right\|_{H^4}\le K
\end{equation}
then for all $\del$ and $\eps$ satisfying 
\begin{equation}
  \label{eq:mucond}
   \tfrac{\del}{\eps}\le C
\end{equation}
 the solution
  $\lp\begin{smallmatrix}\myvec u\\h\end{smallmatrix}\rp$ to \eqref{eq:euler} having initial data
  $\lp\begin{smallmatrix}\myvec u_0\\h_0\end{smallmatrix}\rp$ belongs to $C^0([0,T(C,K)];\allowbreak H^4(\mathbb R^2))$ 
 and satisfies
\begin{equation}\label{eq:unifbnd}
\max_{0\le t\le T(C,K)}\left\|\lp\begin{smallmatrix}\myvec u(t)\\h(t)\end{smallmatrix}\rp\right\|_{H^4}\le B(C,K).
\end{equation}
\end{theorem}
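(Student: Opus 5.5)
The plan is a uniform energy estimate at the $H^4$ level, closed by a continuation argument; the norm throughout is the $H^4$ norm defined above through $A$. Write $\mu\eqdef\del/\eps$, so that $\mu\le C$ by \eqref{eq:mucond}. Since $\tfr\eps=\tfr\del\,\mu$, system \eqref{eq:euler} takes the form
\begin{equation*}
  \prt_t U+\mathcal S(U)U+\tfr\del L_\mu U=0,\qquad U=\lp\begin{smallmatrix}\myvec u\\h\end{smallmatrix}\rp ,
\end{equation*}
where $\mathcal S(U)$ is the first-order convective/quadratic part and
$L_\mu U=\lp\begin{smallmatrix}\grad h-\mu z\myvec u^\perp\\ \div\myvec u\end{smallmatrix}\rp$ is the large operator \eqref{eq:L}. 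By the divergence theorem, $L_\mu$ is skew-adjoint for $\ip{\,}{}$. The operator $\myvec u\mapsto z\myvec u^\perp$ is pointwise skew.

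For fixed $\del,\eps$, classical local existence for symmetrizable hyperbolic systems on a compact manifold gives a solution in $C^0([0,T_{\del,\eps}];H^4)$. That solution persists as long as its $H^4$ norm stays bounded. It therefore suffices to prove an a priori estimate of the form
\begin{equation*}
  \tfrac{d}{dt}E\le F(E),\qquad E\sim\|U\|_{H^4}^2,
\end{equation*}
with $F$ independent of $\del,\eps$ subject to $\mu\le C$. A standard ODE comparison then yields $T(C,K)$ and $B(C,K)$.

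The symmetrizer is chosen to absorb the large factor in the $h\div\myvec u$ term. Multiply the momentum equation by $1+\del h$, or equivalently use the weight $(1+\del h)^{-1}$ on the $h$-component. The principal part then becomes symmetric, and the $O(1/\del)$ part becomes $\tfr\del L_\mu$ plus $O(1)$ terms. This gives a uniform $L^2$ estimate, since $\del\le C\eps$ and both parameters are small.

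For derivatives one cannot simply apply $A^4$. The commutator $[A^4,z\,\cdot^\perp]$ is of order $3$ and is multiplied by $\mu/\del$, so it is unbounded. I would therefore use a differential operator that commutes with $L_\mu$, as in the approach of \cite{MR2487851}. Two candidates are available:
\begin{itemize}
\item $\prt_\phi$, i.e.\ the rotation about the axis, which commutes with $L_\mu$ because $z$ and the metric are $\phi$-independent;
\item the second-order operator $D_\mu$ to be constructed in \cref{sec:operators}, with $[D_\mu,L_\mu]=0$ and $D_\mu$ elliptic with coefficients bounded uniformly for $\mu\le C$.
\end{itemize}
Apply $D_\mu^2$ to the equation and pair with the weighted $D_\mu^2U$. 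The large term then drops out exactly by skew-adjointness and commutation. The only remaining $O(1/\del)$ contributions come from commuting $D_\mu^2$ with the symmetrizer weight, which is $\del$-close to the identity, so these are $O(1)$. The nonlinear terms are handled by the usual Moser/commutator estimates on the sphere, using $H^4\subset W^{1,\infty}$.

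This yields $\ddt\bigl(\|U\|_{L^2}^2+\|D_\mu^2U\|_{L^2}^2\bigr)\le F(\|U\|_{H^4})$. Uniform ellipticity of $D_\mu$ on the compact sphere then gives $\|U\|_{H^4}\le c\,(\|D_\mu^2U\|_{L^2}+\|U\|_{L^2})$, which closes the estimate.

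The main obstacle is the commuting operator. It must commute exactly with the variable-coefficient $L_\mu$, and it must be uniformly elliptic including near the poles and near the equator $z=0$, where the Coriolis term degenerates. If a single exact commutant fails, the fallback is a partial scheme:
\begin{itemize}
\item estimate $\prt_\phi^kU$ directly by energy, since $\prt_\phi$ commutes with $L_\mu$;
\item recover the remaining derivatives from the equation itself, writing $\tfr\del L_\mu U=-\prt_tU-\mathcal S(U)U$;
\item combine this with the vorticity equation, which has no $1/\del$ term after subtracting $\mu z/\del$ times the $h$-equation, to control $\curl\myvec u$, $\div\myvec u$ and $\grad h$ and hence the full $H^4$ norm.
\end{itemize}
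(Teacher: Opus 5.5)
Your main argument is essentially the paper's proof. The paper's $M=M_1+M_2$ in \eqref{eq:M}--\eqref{eq:M2} is exactly the $D_\mu$ you postulate: the Laplacian plus zeroth-order $\mu$-dependent terms, so ellipticity at the poles and the equator is automatic. Your steps are the paper's steps: apply $M^2$, cancel the large term using antisymmetry of $L$ and $[M,L]=0$, bound the commutator of $M^2$ with $\grad_{\myvec u}$, use the equivalence of $k\|\cdot\|_{L^2}^2+\|M^2\cdot\|_{L^2}^2$ with the squared $H^4$ norm, and integrate the resulting ODE inequality.

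The only real difference is the symmetrization. The paper substitutes $h=((1+\del\tilde h/2)^2-1)/\del$, so that \eqref{eq:eulermod} is symmetric for the unweighted inner product and no weight commutators appear. Your weight $S=\mathrm{diag}(I,(1+\del h)^{-1})$ instead produces the extra terms $\del^{-1}[D_\mu^2,S^{-1}]L_\mu U$ and $\ip{S_tD_\mu^2U}{D_\mu^2U}$. These are indeed $O(1)$, because $S^{-1}-I=\mathrm{diag}(0,\del h)$ and $\del h_t=O(1)$.

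Your fallback is unnecessary, and it would not work as stated. On the sphere the equation for $\curl\myvec u-\mu z h$ still contains the large $\beta$-term $\tfrac{\mu}{\del}\sin\theta\,u_\theta$, because $dz=-\sin\theta\,d\theta\neq0$. Also, recovering derivatives from $\tfrac{1}{\del}L_\mu U=-\prt_tU-\mathcal S(U)U$ would require uniform bounds on $\prt_tU$, and these fail for the ill-prepared data the theorem allows.
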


\begin{remark}
\begin{enumerate}
\item The initial data $\lp\begin{smallmatrix}\myvec u_0\\h_0\end{smallmatrix}\rp$ are allowed to depend on $\delta$ and $\eps$ as long as
  \eqref{eq:initbnd} holds with $K$ independent of those parameters.
\item
  The restriction~\eqref{eq:mucond} arises from the fact that positive powers of $\mu\eqdef\frac\del\eps$ appear in the commuting
  operator~\eqref{eq:M} below used to obtain the uniform bound~\eqref{eq:unifbnd}. However, as noted in \cite[Introduction]{MR3803773}, the more
  severe restrictions that usually apply to systems having two small parameters are not needed here because the time-derivative terms are not
  multiplied by coefficients that depend on the dependent variables.
\item
A similar result holds when $H^4$ is replaced by $H^{2s}$ for any integer $s>2$, and corresponding versions of~\cref{thm:converge}
and~\cref{thm:threescale} below then hold. The restriction to Sobolev spaces of even order is due to the fact that the commuting operator~\eqref{eq:M}
obtained below has order two.
\end{enumerate}
\end{remark}

\begin{theorem}[Convergence]\label{thm:converge} Suppose that 
\begin{equation}\label{eq:muconst}
\text{the ratio $\tfrac{\del}{\eps}$ remains constant as $\del$ and $\eps$ tend to zero,}
\end{equation}
and that the initial data has the form
\begin{equation}
  \label{eq:uinit3}
  \lp\begin{smallmatrix}\myvec u\\  h\end{smallmatrix}\rp_{\eval{t=0}}
=\lp\begin{smallmatrix}\myvec u_{0,0}\\h_{0,0}\end{smallmatrix}\rp+\eps \lp\begin{smallmatrix}\myvec u_{0,1}\\h_{0,1}\end{smallmatrix}\rp
+\eps^2\lp\begin{smallmatrix}\myvec u_{0,2}(\eps)\\h_{0,2}(\eps)\end{smallmatrix}\rp,
\end{equation}
where, for some positive constant~$K$,
\begin{equation}
  \label{eq:unit3bnd}
  \left\|\lp\begin{smallmatrix}\myvec u_{0,j}\\h_{0,j}\end{smallmatrix}\rp\right\|_{H^4}\le K, \qquad\text{for $j=0,1,2$}.
\end{equation}
Assume further that 
  \begin{equation}
    \label{eq:wellprep0}
    - \grad h_{0,0}+\mu z \myvec u_{0,0}^\perp=\myvec 0,\qquad \div \myvec u_{0,0}=0,
  \end{equation}
where 
  \begin{equation}
    \label{eq:mu}
    \mu\eqdef\tfrac\del\eps.
  \end{equation}
  Then as $\del,\eps\to0$ the solution to \eqref{eq:euler} with initial data~\eqref{eq:uinit3}
   converges, both in $C^0([0,T(\mu,3K)];H^s(\mathbb S^2))$ for $s<4$ and weak-$*$ in
  $L^\infty([0,T(\mu,3K)];H^4(\mathbb S^2))$ to the solution $\lp\begin{smallmatrix}\overline{\myvec u}\\[1pt] \overline h\end{smallmatrix}\rp$ of
\begin{equation*}
\prt_t\! \lp\begin{smallmatrix}\overline{\myvec u}\\[1pt] \overline h\end{smallmatrix}\rp=0, \qquad
\lp\begin{smallmatrix}\overline{\myvec u}\\[1pt] \overline h\end{smallmatrix}\rp_{\eval t=0}=\lp\begin{smallmatrix}\myvec u_{0,0}\\h_{0,0}\end{smallmatrix}\rp.
\end{equation*}
More specifically, the limit is identically equal to the initial data, which have the form
\begin{equation*}
\overline{\myvec u}\equiv\myvec u_{0,0}=u_{\phi,0}(\theta)\myvec e_\phi 
\quad  \text{and}\quad\overline h\equiv h_{0,0}=\widetilde h_{0}-\mu\!\int_0^\theta \cos\?\beta\,\sin\?\beta\,u_{\phi,0}(\beta)\,d\beta,
\end{equation*}
where $\myvec e_\phi$ is given in \eqref{eq:ucoords} and the function~$u_{\phi,0}(\theta)$ and constant~$\widetilde h_{0}$ are arbitrary.
\end{theorem}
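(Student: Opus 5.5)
The plan is to combine the uniform bound of \cref{thm:unif} with a uniform bound on the time derivative, then use compactness and identify the limit through the kernel of the large operator. Write $U=\lp\begin{smallmatrix}\myvec u\\h\end{smallmatrix}\rp$ and \eqref{eq:euler} as $\prt_t U+N(U)=\tfr\del L U$, with
\begin{equation*}
LU\eqdef\lp\begin{smallmatrix}-\grad h+\mu z\myvec u^\perp\\ -\div\myvec u\end{smallmatrix}\rp .
\end{equation*}
Here $L$ is skew-adjoint with respect to \eqref{eq:ip}.

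\textbf{Step 1: uniform bounds on $U$.} For $\eps\le1$, \eqref{eq:uinit3}\textendash\eqref{eq:unit3bnd} give $\|U(0)\|_{H^4}\le 3K$. Since \eqref{eq:muconst} keeps $\del/\eps=\mu$ fixed, \cref{thm:unif} yields existence on $[0,T(\mu,3K)]$ with $\|U\|_{H^4}\le B(\mu,3K)$.

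\textbf{Step 2: initial time derivative.} By \eqref{eq:wellprep0}, $LU_{0,0}=0$. Hence
\begin{equation*}
\tfr\del LU(0)=\tfrac\eps\del LU_{0,1}+\tfrac{\eps^2}\del LU_{0,2}=\tfr\mu\lp LU_{0,1}+\eps LU_{0,2}\rp ,
\end{equation*}
which is bounded in $H^3$. Therefore $\prt_tU(0)$ is bounded in $H^3$ uniformly in $\eps$.

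\textbf{Step 3: propagate the time-derivative bound.} Differentiate the equation in $t$. Then $V=\prt_tU$ satisfies a linear system with the same large skew term $\tfr\del LV$ and lower-order coefficients controlled by Step 1. I would redo the energy estimate of \cref{sec:unif} for this linearized system, using the commuting second-order operator \eqref{eq:M} so that the large term drops out of the estimate for $M V$. This should give a uniform bound on $\|\prt_tU\|$ in $H^2$ (or $L^2$ suffices) on $[0,T(\mu,3K)]$ via Gronwall. This is the step I expect to be the main obstacle: the commutator of $M$ with the linearized convective terms must be checked to be of acceptable order, and $T$ must not shrink.

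\textbf{Step 4: compactness.} Steps 1 and 3 give bounds in $L^\infty H^4$ and $\mathrm{Lip}\,H^2$. Arzel\`a\textendash Ascoli with interpolation then gives convergence of a subsequence in $C^0H^s$ for $s<4$, and Banach\textendash Alaoglu gives weak-$*$ convergence in $L^\infty H^4$, to some $\overline U$.

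\textbf{Step 5: the limit lies in $\ker L$.} Multiplying the equation by $\del$ and letting $\del\to0$ gives $L\overline U=0$ for all $t$. I would characterize $\ker L$ directly. Taking the curl of $\grad h=\mu z\myvec u^\perp$ gives $\div(z\myvec u)=0$. Combined with $\div\myvec u=0$, this yields $\myvec u\cdot\grad z=0$, so $u_\theta=0$. Then $\div\myvec u=\prt_\phi u_\phi=0$, so $u_\phi=u_\phi(\theta)$. Finally the $\phi$- and $\theta$-components of $\grad h=\mu z\myvec u^\perp$ integrate to the stated formula for $h$. This gives the claimed form of $U_{0,0}$ and of $\overline U(t)$ for every $t$.

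\textbf{Step 6: the limit is stationary.} Let $W$ be a time-independent element of $\ker L$. Skew-adjointness gives $\ip{LU}{W}=-\ip{U}{LW}=0$, so
\begin{equation*}
\tfrac{d}{dt}\ip{U}{W}=-\ip{N(U)}{W}.
\end{equation*}
Passing to the limit using the strong convergence gives $\tfrac d{dt}\ip{\overline U}{W}=-\ip{N(\overline U)}{W}$.

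For a zonal state, \eqref{eq:euler-local} shows that $N(\overline U)$ has zero $\phi$-component and zero height component, since $u_\theta=0$ and all functions are independent of $\phi$. Its only nonzero entry is the $\theta$-component $-u_\phi^2\sin\theta\cos\theta$. This is orthogonal to every $W\in\ker L$, because such $W$ has velocity purely along $\myvec e_\phi$.

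Hence $\ip{\overline U(t)}{W}$ is constant for all $W\in\ker L$. Since $\overline U(t)\in\ker L$, this means $\prt_t\overline U=0$, and so $\overline U\equiv\overline U(0)=U_{0,0}$.

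The limit is therefore unique, so the whole family converges, not just a subsequence.
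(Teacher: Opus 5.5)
Your proof is correct. Steps 1--5 follow the paper, and the real difference is in Step 6. For Step 3, the paper does only the $L^2$ estimate you mention parenthetically. It differentiates the symmetrized system \eqref{eq:eulermod} in $t$, and the $O(\tfrac1\del)$ term drops out by the antisymmetry of $L$ alone. So the commuting operator $M$ is not needed there, and the obstacle you anticipate never arises.

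For Step 6, the paper applies the explicit non-orthogonal pseudo-projection $P(\mu)^*$ from \eqref{eq:Pstar}. It checks $P(\mu)^*L=0$ by direct calculation. It then obtains $\prt_t P(\mu)^*\overline U=0$, using that the nonlinear terms of a zonal state are multiples of $\myvec e_\theta$. Finally it must recover $\prt_t\overline U=0$ from this via the zonal form \eqref{eq:lim1}. That last step amounts to injectivity of $P(\mu)^*$ on $\ker L$, which the paper passes over quickly.

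You instead test the equation against time-independent $W\in\ker L$. You remove the large term using the skew-adjointness that follows from Lemma~\ref{lem:Lanti} by polarization; this is the classical orthogonal projection onto $\ker L$ in weak form. Your route needs no explicit projection formula and makes the final step immediate. Indeed, $\overline U(t)-U_{0,0}\in\ker L\cap H^4$ is itself an admissible test function and is orthogonal to $\ker L$, hence zero. The paper's explicit operator gives a concrete form of the limit equations, which would matter if they were nontrivial. It is also reused with $\mu=0$ in the three-scale proof.
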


\begin{remark}
  The results in \cite{MR2487851} for the beta-plane model are presented only for the case when the Froude number equals the Rossby number.  However,
  it is not hard to modify the arguments there to show that when the Rossby number~$\eps$ is different from the Froude number~$\del$ then the uniform
  existence result of \cite{MR2487851} remains valid provided that \eqref{eq:mucond} holds and the convergence result there remains valid provided
  that \eqref{eq:muconst} holds.
\end{remark}

\begin{theorem}[Three-scale convergence]\label{thm:threescale}
  Suppose that 
\begin{equation}\label{eq:nuconst}
\text{the ratio $\tfrac{\del}{\eps^2}$ remains constant as $\del$ and $\eps$ tend to zero,}
\end{equation}
and that the initial data has the form~\eqref{eq:uinit3}, where \eqref{eq:unit3bnd} holds.
Assume further that
\begin{equation}
  \label{eq:unit3conds}
\begin{aligned}
  h_{0,0}&=\widetilde h_{0,0}, & &\myvec u_{0,0}=u_{\phi,0,0}(\theta)\myvec e_\phi, 
\\
h_{0,1}&= \widetilde h_{0,1}-\nu\!\int_0^\theta \cos\?\beta\,\sin\?\beta\,u_{\phi,0,0}(\beta)\,d\beta,
& &\div \myvec u_{0,1}=0,
\end{aligned}
\end{equation}
where
\begin{equation}
  \label{eq:nu}
  \nu\eqdef \tfrac{\del}{\eps^2},
\end{equation}
$\myvec e_\phi$ is given in \eqref{eq:ucoords}, and the function~$u_{\phi,0,0}(\theta)$ and constants~$\widetilde h_{0,0}$ and
$\widetilde h_{0,1}$ are arbitrary.

Then as $\del,\eps\to0$ the solution to \eqref{eq:euler} with initial data~\eqref{eq:uinit3}
 converges, both in $C^0([0,T(\mu,3K)];H^s(\mathbb S^2))$ for $s<4$ and weak-$*$ in
$L^\infty([0,T(\mu,3K)];H^4(\mathbb S^2))$ to the limit initial data~$\lp\begin{smallmatrix}\myvec u_{0,0}\\ h_{0,0}\end{smallmatrix}\rp$.
\end{theorem}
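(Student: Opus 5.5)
Under \eqref{eq:nuconst} the ratio $\mu=\del/\eps=\nu\eps$ tends to zero, so \eqref{eq:mucond} holds with, say, $C=1$ for small $\eps$. \cref{thm:unif} therefore applies: since the data \eqref{eq:uinit3} have $H^4$ norm at most $3K$ for $\eps\le1$, the solutions exist on a common interval $[0,T]$ and are bounded in $H^4$ uniformly in $\eps$. Compactness then reduces the convergence statement (in $C^0H^s$ for $s<4$ and weak-$*$ in $L^\infty H^4$) to two things: a uniform bound on $\prt_t$ of the solution in some weaker norm, and identification of the limit. Together these give Aubin--Lions compactness, and uniqueness of the limit then yields convergence of the whole family.

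\textbf{Step 1 (time derivative at $t=0$).} I would insert \eqref{eq:uinit3} into \eqref{eq:euler}, writing $\tfr\del=\tfrac1{\nu\eps^2}$.
\begin{itemize}
\item[] \emph{Height equation.} The large term is $-\tfrac1{\nu\eps^2}\div(\myvec u_{0,0}+\eps\myvec u_{0,1}+\eps^2\myvec u_{0,2})$. Here $\div\myvec u_{0,0}=0$ because $\myvec u_{0,0}=u_{\phi,0,0}(\theta)\myvec e_\phi$ is zonal, and $\div\myvec u_{0,1}=0$ by \eqref{eq:unit3conds}. What remains is $O(1)$.
\item[] \emph{Momentum equation.} The large terms are $-\tfrac1{\nu\eps^2}\grad h+\tfrac z\eps\myvec u^\perp$. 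At order $\eps^{-2}$ the only contribution is $\grad h_{0,0}$, which vanishes since $h_{0,0}$ is constant. At order $\eps^{-1}$ the terms are $-\tfr\nu\grad h_{0,1}+z\myvec u_{0,0}^\perp$. Using $\myvec e_\phi^\perp$ from \eqref{eq:uperp} and \eqref{eq:grad}, this vanishes exactly when $\prt_\theta h_{0,1}=-\nu\cos\theta\sin\theta\,u_{\phi,0,0}$, which is the formula in \eqref{eq:unit3conds}.
\end{itemize}
Hence $\prt_t(\myvec u,h)|_{t=0}$ is bounded in $H^3$ uniformly in $\eps$, with the $H^4$ bounds \eqref{eq:unit3bnd} absorbing derivatives of the remainder terms.

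\textbf{Step 2 (propagating the time-derivative bound).} This is the main obstacle. Differentiating \eqref{eq:euler} in $t$ gives $V=\prt_t(\myvec u,h)$, which solves the linearized system. That system has the same antisymmetric large operator, now with two parameters $\tfr\del,\tfr\eps$. Because the large operator is skew-adjoint in the inner product \eqref{eq:ip}, the $L^2$ energy estimate for $V$ is independent of $\eps$ and $\del$. The coefficients of the lower-order terms involve $\grad(\myvec u,h)$, which is bounded by Step 0. Gronwall then bounds $\|V(t)\|_{L^2}$ uniformly on $[0,T]$. One should check that this estimate, unlike the $H^4$ estimate, needs no commuting operator, because only $L^2$ is required. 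If $L^2$ turns out too weak for the compactness step, I would instead interpolate with the $H^4$ bound, or fall back on the $M$-based estimate of \cref{sec:unif}, which is uniform since $\mu\to0$.

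\textbf{Step 3 (identifying the limit).} With $\prt_t$ bounded in $L^\infty L^2$ and the solution bounded in $L^\infty H^4$, Aubin--Lions gives a subsequence converging in $C^0H^s$ for $s<4$ and weak-$*$ in $L^\infty H^4$. Since $\|(\myvec u,h)(t)-(\myvec u,h)(0)\|_{L^2}\le Ct$ only, the bound alone does not show the limit is constant. I would therefore use the structure of the equations. Multiplying the equations by $\del$ and passing to the limit forces $\grad\bar h=0$ and $\div\bar{\myvec u}=0$. Multiplying by $\eps$ and using $\del/\eps\to0$ forces $z\bar{\myvec u}^\perp=\lim\tfr\nu\eps^{-1}\grad h$; I would control this via the expansion $h=\bar h+\eps h_1$, with $h_1$ bounded as in Step 1.

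To get time independence, I would mimic the proof of \cref{thm:converge}, which I cannot see but presumably projects the equations onto the kernel of the large operator. I would test against functions in the kernel of the limiting large operator, namely zonal $\myvec u$ and constant $h$. The large terms then drop out after integration by parts. The remaining nonlinear terms $\grad_{\myvec u}\myvec u$ for zonal $\myvec u=u_\phi(\theta)\myvec e_\phi$ are gradients of functions of $\theta$. They are therefore orthogonal to zonal fields, and $\prt_t$ of the projection vanishes. Combining this with the constraints above, the limit is stationary and equal to $(\myvec u_{0,0},h_{0,0})$. Uniqueness of the limit then gives convergence of the full family.
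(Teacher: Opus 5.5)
Your overall architecture matches the paper's: the uniform $H^4$ bound from Theorem~\ref{thm:unif}, well-preparedness at $t=0$, an $L^2$ estimate for $\prt_t U$, compactness, and identification of the limit by testing against zonal fields. But the decisive step in your Step~3 fails as written.

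\textbf{The gap in Step~3.} You claim that when the equations are tested against $\myvec w=w(\theta)\myvec e_\phi$ and a constant $k$, the large terms drop out after integration by parts. The $\tfr\del$ terms do drop out. The Coriolis term, which here is of the intermediate order $\tfr\eps$, does not. By \eqref{eq:uperp},
\[
\tfr\eps\int_{\mathbb S^2} z\,g(\myvec w,\myvec u^\perp)=\tfr\eps\int_{\mathbb S^2}\cos\theta\,\sin\theta\,w(\theta)\,u_\theta .
\]
This is generally nonzero and a priori of size $\tfr\eps$. Knowing $\overline u_\theta=0$ in the limit does not help, because of the factor $\tfr\eps$. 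This is exactly the obstruction recorded in the second equation of \eqref{eq:P0L10}, and it is why the three-scale proof is trickier.

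The missing idea is to cancel this term with the height equation. Since $z\myvec w^\perp=\grad F$ with $F(\theta)=-\int_0^\theta\cos\beta\,\sin\beta\,w(\beta)\,d\beta$, the term equals $\tfr\eps\int F\div\myvec u$.
\begin{enumerate}
\item[(a)] The paper applies $P(\mu)^*$ with $\mu=\nu\eps$, not $P(0)^*$. Testing against $P(\mu)^*$ is the same as testing with the $\eps$-dependent pair $P(\mu)(\myvec w,0)=(\myvec w,\mu F)$. By Lemma~\ref{lem:N(L)} this pair lies in the null space of $L$, so antisymmetry of $L$ removes all large terms exactly. The correction $\mu F$ then disappears in the limit because $\mu=\nu\eps\to0$.
\item[(b)] Equivalently, use the height equation of \eqref{eq:eulermod} together with your Step~2 bound:
\[
\tfr\eps\int F\div\myvec u=-\mu\int F\lp\prt_t h+\grad_{\myvec u}h+\tfrac h2\div\myvec u\rp=O(\nu\eps).
\]
\end{enumerate}

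\textbf{Secondary points.}
\begin{enumerate}
\item[(i)] In Step~2 you differentiate the unsymmetrized system \eqref{eq:euler}. Its height equation contains $h\div\myvec u_t$, which is first order in $V$ and has no symmetric partner. So the $L^2$ estimate is not closed by skew-adjointness of the large operator alone. It must be done on the symmetrized system \eqref{eq:eulermod}, as the paper does.
\item[(ii)] You restrict to zonal test functions without first proving the limit is zonal. Note also that the kernel of the limiting operator $L_1$ is all divergence-free fields with constant $h$, not only zonal ones. Your proposed control of $h_1=(h-\overline h)/\eps$ via Step~1 only holds at $t=0$. That bound is not needed. Apply $\perp$ and then $\div$ to the momentum equation; by \eqref{eq:divgradperp0} this kills $\grad h$ identically. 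Multiplying by $\eps$ and passing to the limit gives $\div(z\overline{\myvec u})=0$. Combined with $\div\overline{\myvec u}=0$, this yields $\overline u_\theta=0$ and then $\prt_\phi\overline u_\phi=0$.
\end{enumerate}
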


\begin{remark}
\begin{enumerate}
\item  Theorems similar to \cref{thm:threescale} can be also obtained under assumptions that $\frac{\del}{\eps^p}$ remains fixed as $\del,\eps\to0$ for any
  integer $p$, but the case $p=2$ in \cref{thm:threescale} is the simplest. The analogous theorem for the case when $p>1$ is not an integer
  would require that the initial data be trivial.
\item
As is often the case for three-scale singular limits, the proof of \cref{thm:threescale} is trickier than the proof of \cref{thm:converge}.
\end{enumerate}
\end{remark}

\section{The Large Operator and a Commuting Differential Operator}
\label{sec:operators}

\paragraph{Large operator and its null space}
The terms in \eqref{eq:euler}\textendash\eqref{eq:euler-local} that are multiplied by $\tfr\del$ or $\tfr\eps$ are linear in the dependent
variables~$U\eqdef\lp\begin{smallmatrix}\myvec u\\ h\end{smallmatrix}\rp$. It will be convenient to let $\mu$ be defined by \eqref{eq:mu} whether that
ratio is fixed or variable, and substitute $\tfr\eps=\tfrac\mu\del$ into \eqref{eq:euler}\textendash\eqref{eq:euler-local}, so that the right sides of
those equations become $\tfr{\del}LU$, where $L$ has the form
\begin{equation}
  \label{eq:L}
  L=L_1+L_2,
\end{equation}
with
\begin{equation}
  \label{eq:L1}
  L_1\eqdef -\lp\begin{smallmatrix} 0 & \grad\\ \div & 0\end{smallmatrix}\rp
=-\lp\begin{smallmatrix}0&0&\tfr{\sin^2\!\theta}\prt_\phi\\0&0&\prt_\theta\\\prt_\phi&\frac1{\sin\?\theta}\prt_\theta\sin\?\theta&0\end{smallmatrix}\rp
\end{equation}
and
\begin{equation}
  \label{eq:L2}
  L_2\eqdef \mu z\lp \begin{smallmatrix}\perp &0\\0&0\end{smallmatrix}\rp
= \lp\begin{smallmatrix}\phantom{-}0&\mu\tfrac{\cos\?\theta}{\sin\?\theta}&0\\-\mu\sin\?\theta\cos\?\theta &0&0\\\phantom{-}0&0&0 \end{smallmatrix}\rp,
\end{equation}
in which formula~\eqref{eq:perp} has been used. Here and later $\frac1{\sin\?\theta}\prt_\theta\sin\?\theta$ denotes the operator whose action on a
function~$f$ is $\frac1{\sin\?\theta}\prt_\theta(\sin\?\theta\, f)$, and the first form of an operator gives it definition on pairs
$\lp\begin{smallmatrix}\myvec u\\h\end{smallmatrix}\rp$, where $\myvec u$ is a vector field on the sphere and $h$ is a scalar field, while the second
form of the operator shows its action on triplets $\lp\begin{smallmatrix}u_\phi\\u_\theta\\h\end{smallmatrix}\rp$, where $u_\phi$ and $u_\theta$ are
the components of the vector field~$\myvec u$ in accordance with~\eqref{eq:ucoords}.  
\begin{lemma}\label{lem:Lanti}
  $L$ is an anti-symmetric operator, i.e.
\begin{equation}
  \label{eq:Lanti}
  \ip{\lp\begin{smallmatrix}\myvec u\\h\end{smallmatrix}\rp}{L\lp\begin{smallmatrix}\myvec u\\h\end{smallmatrix}\rp}=0.
\end{equation}
\end{lemma}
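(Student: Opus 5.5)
Since $L=L_1+L_2$, it suffices to show $\ip{U}{L_1U}=0$ and $\ip{U}{L_2U}=0$ separately for $U=\lp\begin{smallmatrix}\myvec u\\h\end{smallmatrix}\rp$ smooth (the general case following by density, or by reading the identity for $U\in H^1$). The inner product~\eqref{eq:ip} on pairs is the sum of the Riemannian $L^2$ inner product of the vector components and the $L^2$ inner product of the scalar components, both with respect to the area measure $\sin\theta\,d\theta\,d\phi$.

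For $L_1$, we have $\ip{U}{L_1U}=-\int_{\mathbb S^2}\lp g(\myvec u,\grad h)+h\div\myvec u\rp dA$. Since $g(\myvec u,\grad h)+h\div\myvec u=\div(h\myvec u)$, the divergence theorem on the closed manifold $\mathbb S^2$ makes the integral vanish. Equivalently, the defining duality of gradient and divergence in the appendix gives $\int g(\myvec u,\grad h)\,dA=-\int h\div\myvec u\,dA$. As a check in coordinates: the vector part contributes $\int\lp u_\phi\,\tppp h\phi+u_\theta\,\tppp h\theta\rp\sin\theta\,d\theta\,d\phi$, because the metric weight $\sin^2\theta$ on the $\phi$ component cancels the $\tfr{\sin^2\theta}$ in~\eqref{eq:L1}. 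The scalar part contributes $\int h\lp\tppp{u_\phi}\phi\sin\theta+\prt_\theta(\sin\theta\,u_\theta)\rp d\theta\,d\phi$. Integrating by parts in $\phi$ (periodic) and in $\theta$ (the boundary terms vanish because $\sin\theta=0$ at $\theta=0,\pi$) shows the two contributions cancel.

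For $L_2$, the operator is a multiplication operator acting only on $\myvec u$, so $\ip{U}{L_2U}=\mu\int z\,g(\myvec u,\myvec u^\perp)\,dA$. The perpendicular~\eqref{eq:uperp} is pointwise a rotation by a right angle, so $g(\myvec u,\myvec u^\perp)=0$ at every point. In coordinates, the metric weights are $\sin^2\theta$ for $u_\phi$ and $1$ for $u_\theta$, so $g(\myvec u,\myvec u^\perp)=\sin^2\theta\,u_\phi\frac{\cos\theta}{\sin\theta}u_\theta-u_\theta\sin\theta\cos\theta\,u_\phi=0$.

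The only delicate point is the coordinate singularity at the poles in the $L_1$ integration by parts. This is handled most cleanly by using the coordinate-free divergence theorem rather than the spherical-coordinate computation, and by noting that $\tfr{\sin\theta}$ factors never appear unweighted in the integrand. The product $\sin\theta\,u_\theta$ is bounded because $u_\theta$ is a component with respect to the unit-length coordinate field $\prt_\theta$.
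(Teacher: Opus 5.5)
Your proof is correct and is essentially the paper's argument: the $L_1$ contribution equals $-\int_{\mathbb S^2}\div(h\myvec u)=0$, which is exactly \eqref{eq:divstargrad}, and the $L_2$ contribution vanishes by the pointwise orthogonality \eqref{eq:uperpperpu}. One cosmetic slip: the coordinate expression you display is really $z\,g(\myvec u,\myvec u^\perp)$ rather than $g(\myvec u,\myvec u^\perp)$, but it vanishes either way.
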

\begin{proof}
  By the definition \eqref{eq:ip} of the inner product,
  \begin{equation*}
    \ip{\lp\begin{smallmatrix}\myvec u\\h\end{smallmatrix}\rp}{L\lp\begin{smallmatrix}\myvec u\\h\end{smallmatrix}\rp}
=-\ip{\lp\begin{smallmatrix}\myvec u\\h\end{smallmatrix}\rp}{\lp\begin{smallmatrix}\grad h\\\div\myvec u\end{smallmatrix}\rp}
+\mu \int_{\mathbb S^2} z g(\myvec u,\myvec u^\perp),
  \end{equation*}
which vanishes on account of \eqref{eq:divstargrad} and \eqref{eq:uperpperpu}.
\end{proof}

As in the classical theory, the null space of $L$ will play two important roles in \cref{thm:converge}. First,
for fixed initial data~$U_0\eqdef\lp\begin{smallmatrix}\myvec u_0\\h_0\end{smallmatrix}\rp$, the well-preparedness condition that the time derivative
at time zero of a solution should be bounded uniformly for $\del\in(0,1]$ holds iff $LU_0=0$. In addition, the limit
$\overline U\eqdef \lim_{\del\to0}U$ of the solutions of \eqref{eq:euler} with well-prepared initial data will be shown to satisfy $L\overline U=0$.
The following result gives a complete description of the null space of $L$.
\begin{lemma}
  \label{lem:N(L)}
A function $U=\lp\begin{smallmatrix}\myvec u\\h\end{smallmatrix}\rp$ satisfies $LU=0$ iff
\begin{equation}
  \label{eq:wellprep}
  \myvec u=u_\phi(\theta)\myvec e_\phi\qquad\text{and}\qquad h=h(\theta)\eqdef c-\mu\int_0^\theta \cos\?\beta\,\sin\?\beta\, u_\phi(\beta)\,d\beta,
\end{equation}
where $\myvec e_\phi$ is the basis vector in the latitudinal direction from~\eqref{eq:ucoords}\textendash\eqref{eq:ephi-etheta}, $u_\phi(\theta)$ is
an arbitrary function of $\theta$, and $c$ is an arbitrary constant.
\end{lemma}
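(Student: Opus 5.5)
Writing $LU=0$ out in the coordinates of \eqref{eq:L1}\textendash\eqref{eq:L2} gives three equations:
\begin{equation*}
-\tfr{\sin^2\!\theta}\prt_\phi h+\mu\tfrac{\cos\?\theta}{\sin\?\theta}u_\theta=0,\qquad
-\prt_\theta h-\mu\sin\?\theta\cos\?\theta\,u_\phi=0,\qquad
\prt_\phi u_\phi+\tfr{\sin\?\theta}\prt_\theta(\sin\?\theta\,u_\theta)=0.
\end{equation*}
The "if" direction is a direct substitution. If $u_\theta=0$, $u_\phi=u_\phi(\theta)$, and $h$ is given by \eqref{eq:wellprep}, then $\prt_\phi h=0$ and $\prt_\theta h=-\mu\cos\?\theta\sin\?\theta\,u_\phi$, so all three equations hold. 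I would also note briefly that $h$ is a smooth function on the sphere. It depends only on $\theta$, and smoothness of $\myvec u$ at the poles makes the integral behave properly there.

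For the "only if" direction, the plan is to show $u_\theta\equiv0$ first; everything else then follows. The first equation gives $\prt_\phi h=\mu\sin\?\theta\cos\?\theta\,u_\theta$. I would differentiate the first equation in $\theta$ and the second in $\phi$ and use $\prt_\theta\prt_\phi h=\prt_\phi\prt_\theta h$. This gives
\begin{equation*}
\prt_\theta(\sin\?\theta\cos\?\theta\,u_\theta)=-\sin\?\theta\cos\?\theta\,\prt_\phi u_\phi
\end{equation*}
whenever $\mu\ne0$; the case $\mu=0$ is handled separately below. The third equation gives $\prt_\phi u_\phi=-\tfr{\sin\?\theta}\prt_\theta(\sin\?\theta\,u_\theta)$. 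Substituting this and writing $w\eqdef\sin\?\theta\,u_\theta$, the relation becomes $\prt_\theta(\cos\?\theta\,w)=\cos\?\theta\,\prt_\theta w$, that is, $-\sin\?\theta\,w=0$. Hence $w=0$ for $\theta\in(0,\pi)$, so $u_\theta\equiv0$. The resulting identity is the curl of the first two equations, which says $\curl(z\myvec u^\perp)=0$. The same computation can be phrased invariantly as $\div(z\myvec u)=z\div\myvec u+g(\grad z,\myvec u)$ together with $\div\myvec u=0$.

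With $u_\theta=0$, the remaining equations give $\prt_\phi u_\phi=0$ and $\prt_\phi h=0$. So $u_\phi$ depends only on $\theta$ and $h=h(\theta)$. Integrating $\prt_\theta h=-\mu\sin\?\theta\cos\?\theta\,u_\phi$ from $0$ then yields \eqref{eq:wellprep}.

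The main obstacle is the degenerate case $\mu=0$, along with the care needed for the coordinate singularities at the poles. Since $\mu>0$ here, the case $\mu=0$ is excluded. Had it arisen, I would argue through Lemma~\ref{lem:Lanti}'s structure: $\grad h=0$ and $\div\myvec u=0$, and $u$ is not forced to be zonal. That case therefore genuinely requires $\mu\neq0$, and the statement implicitly assumes $\mu>0$.

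Regarding the poles, the cross-differentiation argument works on the open coordinate patch $0<\theta<\pi$. Since $u_\theta$ vanishes there, continuity extends the conclusion to all of $\mathbb S^2$. The sign conventions in $\myvec e_\phi$ and $\perp$ from the appendix only need to match \eqref{eq:L2}, which I would check against the coordinate form above.
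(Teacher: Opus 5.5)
Your proof is correct and follows essentially the same route as the paper. Your cross-differentiation is the coordinate form of the paper's step of applying $\perp$ and then $\div$ (i.e.\ taking the curl) to $-\grad h+\mu z\myvec u^\perp=\myvec 0$ and combining the result with $\div\myvec u=0$ to force $\mu\sin\theta\,u_\theta=0$. Your only departure is reading $\prt_\phi h=0$ directly off the first component once $u_\theta=0$, which streamlines the paper's second manipulation (dividing by $z$ before taking the curl) and avoids the equator, where $z=0$.
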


\begin{proof}
By the first forms of the operators in~\eqref{eq:L1}\textendash\eqref{eq:L2}, the condition $LU=0$ can be written as
\begin{equation}
  \label{eq:wellprep1}
  -\grad h+\mu z\myvec u^\perp=\myvec 0\qquad\text{and}\qquad \div \myvec u=0.
\end{equation}
Applying first $\perp$ and then $\div$ to the first equation in \eqref{eq:wellprep1}, adding $z$ times the second equation there to the result, and
using~\eqref{eq:polar} and~\eqref{eq:divgradperp0} implies 
\begin{equation}
  \label{eq:utheta0}
    u_\theta=0.
\end{equation}
On the other hand, dividing the first equation in \eqref{eq:wellprep1} by $z$ and afterwards applying $\perp$ and then $\div$, adding $\mu$ times the
second equation there to the result, and using \eqref{eq:gradperp}, \eqref{eq:grad}, \eqref{eq:uperp}, \eqref{eq:polar}, \eqref{eq:div}, and
\eqref{eq:divgradperp0} implies 
\begin{equation}\label{eq:dhdphi0}
  \tppp{h}{\phi}=0.
\end{equation}
Using formulas \eqref{eq:grad}, \eqref{eq:uperp}, and \eqref{eq:div} to express \eqref{eq:wellprep1} in terms of $h$ and the components $u_\phi$ and
$u_\theta$ of $\myvec u$ from \eqref{eq:ucoords}, and substituting~\eqref{eq:utheta0}\textendash\eqref{eq:dhdphi0} into the results yields
\begin{equation}
  \label{eq:wellprep2}
  -\tppp{h}{\theta}-\mu \cos\?\theta\,\sin\?\theta \, u_\phi=0\qquad\text{and}\qquad \tppp{u_\phi}{\phi}=0.
\end{equation}
Conditions~\eqref{eq:wellprep2} and \eqref{eq:utheta0}\textendash\eqref{eq:dhdphi0} show that
$U=\lp\begin{smallmatrix}\myvec u\\h\end{smallmatrix}\rp$ satisfies $LU=0$ iff \eqref{eq:wellprep} holds.
\end{proof}

\paragraph{Pseudo-projection onto null space of $L$} 
Since the operator~$L$ is antisymmetric, the the $L^2$ orthogonal projection operator $\mathbb P$ onto its null space also maps its range to zero,
i.e., $\mathbb P L=0$. That orthogonal projection operator plays an important role in the calculation of the limit equations for classical singular
limits (e.g. \cite[pp. 9\textendash10]{MR957005}). Here we shall use instead the operator
\begin{equation}
  \label{eq:P}
  P(\mu)\eqdef \lp\begin{smallmatrix} \frac1{\sin^2\!\theta}\,g(\myvec e_\phi,\Avphi \cdot)\myvec e_\phi &0\\
-\mu I_\theta [\frac{\cos\?\theta}{\sin\?\theta} g(\myvec e_\phi,\Avphi\cdot)]&\Av \end{smallmatrix}\rp,
\end{equation}
where $g$ is the metric~\eqref{eq:g} and 
\begin{equation*}
\begin{aligned}
  \Av f&\eqdef \frac{\int_{\mathbb S^2} f}{\int_{\mathbb S^2} 1}=\tfr{4\pi}\!\int_0^\pi\!\int_0^{2\pi} f\sin\?\theta\,d\phi\,d\theta,
\quad \Avphi f\eqdef \tfr{2\pi} \int_{0}^{2\pi}  f\,d\phi,
\\
I_\theta f&\eqdef\int_0^\theta f(\phi,\theta_1)\,d\theta_1.
\end{aligned}
\end{equation*}
The operator~$P(\mu)$ is not an orthogonal projection since $P(\mu)^*$ does not equal~$P(\mu)$, and it is not even a projection at all since $P(\mu)^2$ does not
equal~$P(\mu)$. While both of those defects could be remedied by adding more terms to the operator, it will be sufficient to use the simpler operator~$P(\mu)$
as long as we adjust for those defects where necessary.
\begin{lemma}\label{lem:P}
The range of the operator $P(\mu)$ from \eqref{eq:P} is the null space of $L$, and its adjoint
\begin{equation}
  \label{eq:Pstar}
  P(\mu)^*= \lp\begin{smallmatrix} \frac1{\sin^2\!\theta}\,g(\myvec e_\phi,\Avphi \cdot)\myvec e_\phi\; & 
-\mu \myvec e_\phi \frac{\cos\?\theta}{\sin^2\!\theta} \widehat I_\theta[\sin\?\theta\Avphi \cdot] \\
0&\Av \end{smallmatrix}\rp,
\end{equation}
where
\begin{equation*}
  \widehat I_\theta f\eqdef \int_{\theta}^\pi f(\phi,\theta_1)\,d\theta_1,
\end{equation*}
satisfies 
\begin{equation}
  \label{eq:PL0}
  P(\mu)^*L=0.
\end{equation}
In particular,
\begin{equation}
  \label{eq:P0L10}
  P(0)^*L_1=0 \qquad \text{and}\quad P(0)^*L_2\lp\begin{smallmatrix}\myvec
    u\\h\end{smallmatrix}\rp=\lp\begin{smallmatrix}\mu\frac{\cos\?\theta}{\sin\?\theta}(\Avphi\? u_\theta)\myvec e_\phi\\0\end{smallmatrix}\rp.
\end{equation}
\end{lemma}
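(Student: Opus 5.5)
The plan has three parts: identify the range of $P(\mu)$, compute its adjoint, and then show $P(\mu)^*L=0$. Throughout I work in the coordinates \eqref{eq:ucoords}, writing $P(\mu)$ as an operator on triplets $(u_\phi,u_\theta,h)$.

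\textbf{Range of $P(\mu)$.} I read off from \eqref{eq:g} that $g(\myvec e_\phi,\myvec v)=\sin^2\!\theta\, v_\phi$. Hence the first component of $P(\mu)U$ is $(\Avphi u_\phi)(\theta)\,\myvec e_\phi$, a purely zonal field depending only on $\theta$. The second component is
\begin{equation*}
\Av h-\mu\int_0^\theta \cos\?\beta\,\sin\?\beta\,(\Avphi u_\phi)(\beta)\,d\beta .
\end{equation*}
By \cref{lem:N(L)} this pair lies in the null space of $L$, with $c=\Av h$. Conversely, any element of the null space described in \eqref{eq:wellprep} is the image of itself under $P(\mu)$. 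To see this, take $\Avphi u_\phi=u_\phi$ and $\Av h=c'$. Then $P(\mu)U$ has the same $u_\phi$ and, by \cref{lem:N(L)}, an $h$ differing from the original only by a constant. That constant can be adjusted by adding a constant to the scalar component of the preimage. So the range is exactly the null space.

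\textbf{The adjoint.} I compute $\ip{P(\mu)U}{V}$ using the inner product \eqref{eq:ip}, with area element $\sin\theta\,d\phi\,d\theta$. The diagonal blocks are self-adjoint: $\Av$ is an average, and $\Avphi$ is symmetric in $L^2(d\phi)$ while its output is independent of $\phi$. The off-diagonal term is
\begin{equation*}
-\mu\int \int_0^\theta \cos\?\beta\,\sin\?\beta\,\Avphi u_\phi(\beta)\,d\beta \;\; k(\phi,\theta)\,\sin\?\theta\, d\phi\, d\theta .
\end{equation*}
I swap the order of the $\beta$ and $\theta$ integrals (Fubini), which turns $\int_0^\theta$ into $\int_\beta^\pi$. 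The result is $-\mu\int u_\phi \cos\beta\,\sin\beta\cdot 2\pi\,\widehat I_\beta[\sin\theta\,\Avphi k]$. Rewriting this as $g(\myvec u,\cdot)$, which costs a factor $\sin^{2}$, gives the stated entry $-\mu\,\myvec e_\phi\frac{\cos\theta}{\sin^2\theta}\widehat I_\theta[\sin\theta\,\Avphi\cdot]$ after the averaging constants are matched.

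\textbf{Showing $P(\mu)^*L=0$.} I expect this to be the main obstacle. I apply \eqref{eq:Pstar} to $LU$ for $U=(u_\phi,u_\theta,h)$.

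The scalar component is $-\Av(\div\myvec u)=0$ by the divergence theorem.

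For the vector component, I start with the first block. From \eqref{eq:L1}, the $\phi$-entry of $L_1U$ is $-\sin^{-2}\theta\,\prt_\phi h$, and its $\Avphi$ vanishes by periodicity in $\phi$. This, together with the same periodicity argument applied to the second block, gives the first identity in \eqref{eq:P0L10}. From \eqref{eq:L2}, the $\phi$-entry of $L_2U$ is $\mu\frac{\cos\theta}{\sin\theta}u_\theta$, which gives the contribution $\mu\frac{\cos\theta}{\sin\theta}(\Avphi u_\theta)\myvec e_\phi$. This is the second identity in \eqref{eq:P0L10}.

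For the second block I need $\widehat I_\theta[\sin\theta\,\Avphi(\text{scalar of }LU)]=\widehat I_\theta[-\prt_\theta(\sin\theta\,\Avphi u_\theta)]$, using again that $\Avphi\prt_\phi u_\phi=0$. This equals $\sin\theta\,\Avphi u_\theta$, because the boundary term at $\theta=\pi$ vanishes. Multiplying by $-\mu\frac{\cos\theta}{\sin^2\theta}$ gives $-\mu\frac{\cos\theta}{\sin\theta}\Avphi u_\theta$, which cancels the $L_2$ contribution exactly.

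The delicate points are the following:
\begin{itemize}
\item the sign and orientation conventions of the $\widehat I_\theta$ integral;
\item the vanishing of boundary terms at the poles, which uses smoothness of $\sin\theta\,u_\theta$ at $\theta=0,\pi$;
\item the normalization constants in $\Avphi$ versus the area weight, which must be tracked carefully when the adjoint is formed.
\end{itemize}
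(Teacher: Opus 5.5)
Your proposal is correct and follows essentially the same route as the paper: compute $P(\mu)U$ explicitly and match it with \eqref{eq:wellprep}, justify the adjoint formula by swapping the order of integration (the paper only asserts this), and show that the $L_2$ contribution $\mu\frac{\cos\theta}{\sin\theta}(\Avphi u_\theta)\,\myvec e_\phi$ is cancelled by the off-diagonal term once $\widehat I_\theta$ is applied to $-\prt_\theta(\sin\theta\,\Avphi u_\theta)$. The only slip is your claim that a null-space element is ``the image of itself'' under $P(\mu)$, which fails in general because $\Av h\neq c$, but the constant adjustment you make right afterwards does correctly show that the range contains the whole null space.
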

\begin{proof}
By the definition~\eqref{eq:P} of $P(\mu)$ and formulas~\eqref{eq:ucoords} and~\eqref{eq:g},
\begin{equation}
  \label{eq:Pu}
\begin{aligned}
 \lp\begin{smallmatrix} \frac1{\sin^2\!\theta}\,g(\myvec e_\phi,\Avphi \cdot)\myvec e_\phi &0\\
-\mu I_\theta [\frac{\cos\?\theta}{\sin\?\theta} g(\myvec e_\phi,\Avphi\cdot)]&\Av \end{smallmatrix}\rp
\lp\begin{smallmatrix}\myvec u\\h\end{smallmatrix}\rp
&=\lp\begin{smallmatrix} \frac1{\sin^2\!\theta}\,g(\myvec e_\phi,\Avphi \cdot)\myvec e_\phi &0\\
-\mu I_\theta [\frac{\cos\?\theta}{\sin\?\theta} g(\myvec e_\phi,\Avphi\cdot)]&\Av \end{smallmatrix}\rp
\lp\begin{smallmatrix}u_{\phi}\myvec e_\phi+u_\theta \myvec e_\theta\\h\end{smallmatrix}\rp
\\&=\lp\begin{smallmatrix}(\Avphi u_\phi)\myvec e_\phi\\
(\Av h)-\mu \int_0^\theta \cos\?\theta_1\sin\?\theta_1 (\Avphi u_\phi(\phi,\theta_1))\,d\theta_1\end{smallmatrix}\rp.
\end{aligned}
\end{equation}
Comparing the final expression for $P(\mu)\lp\begin{smallmatrix}\myvec u\\h\end{smallmatrix}\rp$ in \eqref{eq:Pu} with~\eqref{eq:wellprep} shows that the
range of~$P(\mu)$ lies in the null space of~$L$, and since $u_\phi$ and $h$ are arbitrary it contains the entire null space. 

The proofs below will not actually use the fact that the operator denoted $P(\mu)^*$ in \eqref{eq:Pstar} is the adjoint of $P(\mu)$, so we will just note that
the fact that it is indeed the adjoint can be verified by using \eqref{eq:g} and \eqref{eq:ip}\textendash\eqref{eq:adjcond}, and changing the order of
integration in the term in the inner product in which~$\widehat I_\theta$ appears.

By formula~\eqref{eq:Pstar} for $P(\mu)^*$, the definition~\eqref{eq:L}\textendash\eqref{eq:L2} of~$L$, formulas~\eqref{eq:uperp}, \eqref{eq:grad},
and~\eqref{eq:div} for $\myvec u^\perp$, $\grad h$, and $\div \myvec u$, and the fact that the averages~$\Avphi(\prt_\phi\,\cdot)$,
$\Av(\prt_\phi\,\cdot)$, and $\Av(\frac1{\sin\?\theta}\prt_\theta(\sin\?\theta\, \cdot))$ all vanish,
  \begin{equation}
    \label{eq:PL}
\begin{aligned}
    P(\mu)^*L\lp\begin{smallmatrix}\myvec u\\h\end{smallmatrix}\rp
&=P(\mu)^*
\lp\begin{smallmatrix} \mu z\perp  & -\grad\\ -\div & 0\end{smallmatrix}\rp
\lp\begin{smallmatrix}\myvec u\\h\end{smallmatrix}\rp
=
P(\mu)^*
\lp\begin{smallmatrix}\mu z \myvec u^\perp-\grad h\\ -\div \myvec u\end{smallmatrix}\rp
\\&\hskip-30pt=\lp\begin{smallmatrix} \frac1{\sin^2\!\theta}\,g(\myvec e_\phi,\Avphi \cdot)\myvec e_\phi\; & 
-\mu \myvec e_\phi\frac{\cos\?\theta}{\sin^2\!\theta} \widehat I_\theta[\sin\?\theta\Avphi \cdot] \\
0&\Av \end{smallmatrix}\rp
\lp\begin{smallmatrix} (\mu\frac{\cos\?\theta}{\sin\?\theta}u_\theta-\sin^2\!\theta\ppp{h}{\phi})\myvec e_\phi
-(\mu\sin\?\theta u_\phi+\ppp{h}{\theta})\myvec e_\theta\\ 
-\ppp{u_\phi}{\phi}-\frac1{\sin\?\theta}\prt_\theta(\sin\?\theta\,u_\theta)\end{smallmatrix}\rp
\\&\hskip-30pt=
\lp\begin{smallmatrix}\mu \myvec e_\phi \frac{\cos\?\theta}{\sin\?\theta}\Avphi u_\theta
-\mu \myvec e_\phi \frac{\cos\?\theta}{\sin\?\theta}\Avphi u_\theta
\\0\end{smallmatrix}  \rp=\lp\begin{smallmatrix}\myvec 0\\0\end{smallmatrix}\rp,
\end{aligned}
\end{equation}
which shows that \eqref{eq:PL0} holds. Setting $\mu$ equal to zero in \eqref{eq:PL0} yields the first equation in~\eqref{eq:P0L10}, while the second
is obtained by noting what part of~\eqref{eq:PL} comes from $P(0)^*$ and $L_2$.
\end{proof}

\paragraph{Modified Laplacian operator}
As noted in the introduction, the key to our results is the fact that it is possible to modify the Laplacian by adding zeroth-order terms so as to
obtain an operator $M$ that commutes with~$L$.  Define
\begin{equation}
  \label{eq:M}
  M\eqdef M_1+M_2
\end{equation}
with
\begin{equation}
  \label{eq:M1}
\begin{aligned}
  M_1&\eqdef\lp\begin{smallmatrix}-\Delta& 0\\0&-\Delta\end{smallmatrix}\rp
\\&=\lp\begin{smallmatrix}-\prt_\theta^2 -\csc^2\!\theta\,\prt_\phi^2-3\cot\?\theta\,\prt_\theta+2& -2\cot\?\theta \csc^2\!\theta\,\prt_\phi&0\\ 
2\cot\?\theta\,\prt_\phi& -\prt_\theta^2 -\csc^2\!\theta\,\prt_\phi^2-\cot\?\theta\,\prt_\theta+\csc^2\!\theta\,&0\\
0&0&-\prt_\theta^2 -\csc^2\!\theta\,\prt_\phi^2-\cot\?\theta\,\prt_\theta  \end{smallmatrix}\rp
\end{aligned}
\end{equation}
and
\begin{equation}
  \label{eq:M2}
  M_2\eqdef\lp\begin{smallmatrix}\mu^2(\frac12-\sin^2\!\theta)&2\mu\myvec e_\phi\\2 \mu g(\myvec e_\phi,\cdot)&\mu^2(\frac12-\sin^2\!\theta) \end{smallmatrix}\rp
  =\lp\begin{smallmatrix}
    \mu^2(\frac12-\sin^2\!\theta)&0&2\mu\\0&\mu^2(\frac12-\sin^2\!\theta)&0\\2\mu\sin^2\!\theta&0&\mu^2(\frac12-\sin^2\!\theta)\end{smallmatrix}\rp,
\end{equation}
where \eqref{eq:hodge0}, \eqref{eq:lapu}, and \eqref{eq:sphereip} have been used.
The key observation is that $M$ commutes with the large operator~$L$, i.e.
\begin{equation}
  \label{eq:LMML}
[M,L]\eqdef M L-LM=0.
\end{equation}
We found the operator~$M_2$ needed to obtain~\eqref{eq:LMML} via a calculation in Mathematica\textsuperscript{\textregistered}
\cite{Mathematica}, but of course the result can be verified directly:
\begin{lemma}
  The operators~$L$ from \eqref{eq:L}\textendash\eqref{eq:L2} and $M$ from \eqref{eq:M}\textendash\eqref{eq:M2} satisfy \eqref{eq:LMML}.
\end{lemma}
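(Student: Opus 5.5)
Expanding $[M,L]=[M_1,L_1]+[M_1,L_2]+[M_2,L_1]+[M_2,L_2]$ and grouping by powers of $\mu$ gives
\[
[M,L]=[M_1,L_1]+\bigl([M_1,L_2]+[M_2^{(1)},L_1]\bigr)+\bigl([M_2^{(1)},L_2]+[M_2^{(2)},L_1]\bigr)+[M_2^{(2)},L_2],
\]
where $M_2^{(1)}\eqdef 2\mu\lp\begin{smallmatrix}0&\myvec e_\phi\\ g(\myvec e_\phi,\cdot)&0\end{smallmatrix}\rp$ and $M_2^{(2)}\eqdef\mu^2(\tfrac12-\sin^2\!\theta)I$. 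The terms are then of order $\mu^0$, $\mu^1$, $\mu^2$ and $\mu^3$ respectively, and I will show that each group vanishes separately.

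\emph{Order $\mu^0$.} The identity $[M_1,L_1]=0$ says $-\Delta$ commutes with $\grad$ and $\div$. I would prove it coordinate-free with the Hodge identities \eqref{eq:lap=hodge:f}--\eqref{eq:lap=hodge:u}: on functions $\Delta=\div\grad$, and on vector fields $\Delta$ agrees with the Hodge Laplacian $-(d\delta+\delta d)$ transported to vector fields. Since $d$ and $\delta$ commute with the Hodge Laplacian, this gives $\Delta\grad=\grad\Delta$ and $\div\Delta=\Delta\div$. These identities contain no Ricci correction, precisely because we use the Hodge rather than the Bochner Laplacian.

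\emph{Order $\mu^3$.} Here $[M_2^{(2)},L_2]=0$, because $M_2^{(2)}$ is a scalar multiple of the identity.

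\emph{Orders $\mu^1$ and $\mu^2$.} These are the genuine cancellations, and I would check them in the explicit $(u_\phi,u_\theta,h)$ coordinates using the second forms in \eqref{eq:L1}, \eqref{eq:L2}, \eqref{eq:M1}, \eqref{eq:M2}.

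For the $\mu^2$ group, $[M_2^{(2)},L_1]$ is a zeroth-order operator. It comes only from differentiating $f(\theta)=\tfrac12-\sin^2\!\theta$, and its nonzero entries are proportional to $f'(\theta)=-2\sin\theta\cos\theta$, occurring in the $(u_\theta,h)$ and $(h,u_\theta)$ slots. The commutator $[M_2^{(1)},L_2]$ is a product of constant-coefficient matrices with the $\cos\theta\sin\theta$ entries of $L_2$. A direct $3\times3$ matrix computation should show that these two contributions cancel.

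For the $\mu^1$ group, $[M_2^{(1)},L_1]$ is first order, since $M_2^{(1)}$ couples $u_\phi$ and $h$ with coefficients $2$ and $2\sin^2\!\theta$. The commutator $[M_1,L_2]$ is also first order, arising from the derivatives in $\Delta$ hitting the coefficient $z=\cos\theta$ in $z\myvec u^\perp$. To organize this I would use the coordinate-free identities $\Delta(z\myvec v)=z\Delta\myvec v+2\grad_{\grad z}\myvec v+(\Delta z)\myvec v$ together with $\Delta z=-2z$. I also need that $\perp$ commutes with the vector Laplacian on the sphere, via \eqref{eq:divgradperp0} and the related identities. Then $[M_1,L_2]\myvec u=\mu(2\grad_{\grad z}\myvec u^\perp-2z\myvec u^\perp)$, and this must match $-[M_2^{(1)},L_1]$. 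The latter consists of terms like $2\mu\myvec e_\phi\div\myvec u-2\mu\grad(g(\myvec e_\phi,\myvec u))$ in the velocity slot, and $2\mu(g(\myvec e_\phi,\grad h)-\div(h\myvec e_\phi))$ in the height slot. Using that $\myvec e_\phi$ is a Killing field ($\div\myvec e_\phi=0$, $\grad_{\myvec e_\phi}$ acts as $\prt_\phi$) and $\grad z=-\sin\theta\,\myvec e_\theta$, the height-slot term vanishes on the $h$-input, since $\div(h\myvec e_\phi)=g(\myvec e_\phi,\grad h)$. The velocity-slot term should reduce to the same first-order expression in $\myvec u$ as $[M_1,L_2]$.

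I expect the main obstacle to be this first-order matching at order $\mu$, where connection and curvature terms ($\cot\theta$, $\csc^2\!\theta$) must cancel exactly. If the coordinate-free route gets tangled, I will fall back on brute-force expansion in the explicit coordinate matrices, checking each of the nine entries as polynomials in $\prt_\phi,\prt_\theta$ with trigonometric coefficients.
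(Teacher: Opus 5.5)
Your grouping by powers of $\mu$ is sound. The orders $\mu^0$, $\mu^2$ and $\mu^3$ are handled correctly: at order $\mu^2$, the commutators $[M_2^{(2)},L_1]$ and $[M_2^{(1)},L_2]$ have entries $-2\mu^2\sin\theta\cos\theta$ and $+2\mu^2\sin\theta\cos\theta$ in the $(u_\theta,h)$ and $(h,u_\theta)$ slots, exactly as in \eqref{eq:M2L1}--\eqref{eq:M2L2}. Your description of $-[M_2^{(1)},L_1]$ is also right.

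\textbf{The gap: a sign error at order $\mu$.} Your formula $[M_1,L_2]\myvec u=\mu(2\grad_{\grad z}\myvec u^\perp-2z\myvec u^\perp)$ has the wrong sign. Since $M_1=-\Delta$ (not $\Delta$) and $\perp$ commutes with $\Delta$,
\[
[M_1,L_2]\myvec u=-\mu\Delta(z\myvec u^\perp)+\mu z\Delta(\myvec u^\perp)=-\mu\lp 2\grad_{\grad z}\myvec u^\perp+(\Delta z)\myvec u^\perp\rp=\mu\lp 2z\myvec u^\perp-2\grad_{\grad z}\myvec u^\perp\rp .
\]
This is not cosmetic. With $\grad z=-\sin\theta\,\myvec e_\theta$ and \eqref{eq:covderiv} one gets
$\grad_{\grad z}\myvec u^\perp=-\tppp{u_\theta}{\theta}\,\myvec e_\phi+\lp\sin^2\theta\,\tppp{u_\phi}{\theta}+\sin\theta\cos\theta\,u_\phi\rp\myvec e_\theta$.
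Your expression therefore equals
$\mu\lb-\lp 2\tppp{u_\theta}{\theta}+2\cot\theta\,u_\theta\rp\myvec e_\phi+\lp 2\sin^2\theta\,\tppp{u_\phi}{\theta}+4\sin\theta\cos\theta\,u_\phi\rp\myvec e_\theta\rb$.
That is $+[M_2^{(1)},L_1]\myvec u$, not its negative. As written, the order-$\mu$ group would sum to $2[M_2^{(1)},L_1]\neq0$, and the matching you anticipate would fail.

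\textbf{The fix.} With the sign corrected, your formula reproduces \eqref{eq:M1L2final} exactly. It then coincides with the velocity slot of $-[M_2^{(1)},L_1]$, namely $2\mu\lp\myvec e_\phi\div\myvec u-\grad(\sin^2\theta\,u_\phi)\rp$, so the order-$\mu$ group cancels and the argument closes.

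Two facts you invoke also need a line of justification:
\begin{itemize}
\item The Leibniz rule $\Delta(z\myvec v)=z\Delta\myvec v+2\grad_{\grad z}\myvec v+(\Delta z)\myvec v$ holds for the paper's Hodge-type vector Laplacian. The reason is that $\Delta=\operatorname{tr}\grad^2-\mathrm{Ric}$ with $\mathrm{Ric}=\mathrm{id}$ on the unit sphere, a zeroth-order term that commutes with multiplication by $z$. Alternatively, check it directly from \eqref{eq:lapu}.
\item $\perp\Delta=\Delta\perp$ does not follow from \eqref{eq:divgradperp0} alone. It follows from $\div\myvec u^\perp=\curl\myvec u$, $\curl\myvec u^\perp=-\div\myvec u$, $(\myvec v^\perp)^\perp=-\myvec v$ and the definition \eqref{eq:laplacian}, or equivalently from $\star\Delta_H=\Delta_H\star$.
\end{itemize}

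\textbf{Comparison with the paper.} The paper uses effectively the same cancellation pattern: $[M_2,L_1]+[M_2,L_2]$ kills the $\mu^2$ terms, and the remaining first-order part cancels $[M_1,L_2]$. It computes $[M_1,L_2]$ in the language of forms via $d\,z=z\,d-\sin\theta\,d\theta\wedge$, which leads to the long computation \eqref{eq:Q}--\eqref{eq:Qc}. Your route (Leibniz rule, $\Delta z=-2z$, and the Killing property of $\myvec e_\phi$ for $[M_2^{(1)},L_1]$) reduces this to a single covariant derivative and is more transparent. Its cost is reliance on Weitzenb\"ock-type facts not in the appendix; the paper's route is longer but uses only the explicit formulas there.
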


\begin{proof}
  Using first \eqref{eq:L1}, \eqref{eq:M1}, \eqref{eq:lap=hodge:u}, \eqref{eq:lap=hodge:f}, \eqref{eq:hodge},
  \eqref{eq:gradperp}, and \eqref{eq:divcurl}, and then \eqref{eq:flatsharp}, \eqref{eq:starstar}, and \eqref{eq:dd0} yields
\begin{equation}
  \label{eq:M1L1}
\begin{aligned}
  &[M_1,L_1]
\\&\quad=
\lp\begin{smallmatrix}\sharp(\star d \star d+d\star d\star)\flat&0\\0&\star d \star d+d\star d\star\end{smallmatrix}\rp
\lp\begin{smallmatrix}0&\sharp\star d\\\star d\star\flat&0 \end{smallmatrix}\rp
-\lp\begin{smallmatrix}0&\sharp\star d\\\star d\star\flat&0 \end{smallmatrix}\rp
\lp\begin{smallmatrix}\sharp(\star d \star d+d\star d\star)\flat&0\\0&\star d \star d+d\star d\star\end{smallmatrix}\rp
\\&\quad= \lp\begin{smallmatrix} 0&\sharp\star d\star d\star d\\\star d \star d \star d \star\flat&0\end{smallmatrix}\rp
-\lp\begin{smallmatrix} 0&\sharp\star d\star d\star d\\\star d \star d \star d \star\flat&0\end{smallmatrix}\rp
=0.
\end{aligned}
\end{equation}
On the other hand, by \eqref{eq:L2}, \eqref{eq:M1}, \eqref{eq:lap=hodge:u}, \eqref{eq:lap=hodge:f}, \eqref{eq:hodge}, and \eqref{eq:perp},
\begin{equation*}
  \begin{aligned}
    &[M_1,L_2]
\\&\quad= \lp\begin{smallmatrix}\sharp(\star d \star d+d\star d\star)\flat&0\\0&\star d \star d+d\star d\star \end{smallmatrix}\rp 
\lp\begin{smallmatrix} \mu z\sharp \star \flat&0\\0&0\end{smallmatrix}\rp
-\lp\begin{smallmatrix} \mu z\sharp \star \flat&0\\0&0\end{smallmatrix}\rp 
\lp\begin{smallmatrix} \sharp(\star d \star d+d\star d\star)\flat&0\\0&\star d \star d+d\star d\star\end{smallmatrix}\rp .
  \end{aligned}
\end{equation*}
Since the operators $\flat$, $\sharp$, and $\star$ involve only multiplication, not differentiation, the factor $z$ can be moved past them at will,
i.e. $\star z=z\star$, etc.. In contrast, the product rule for derivatives together with \eqref{eq:polar} shows that $d\,z=zd-\sin\?\theta\, d\theta\wedge$,
where the $\wedge$ is omitted if $d\,z$ is applied to a zero-form rather than a one-form, and $d\theta\wedge$ denotes the operator that when applied
to a one-form~$\alpha$ yields $d\theta\wedge \alpha$.  Therefore 
\begin{equation}
  \label{eq:M1L2b}
  [M_1,L_2]=
    \lp \begin{smallmatrix}
    Q&0\\0&0
  \end{smallmatrix}\rp,
\end{equation}
where
\begin{equation}
  \label{eq:Q}
\begin{aligned}
  Q&=\mu[\sharp(\star\, d \star d+d\star d\,\star)z \star \flat -z\sharp \star(\star\, d\, \star d+d\star d\,\star)\flat ]
\\&=\mu\sharp(\star\, d\star d\,  z\star - d\star d\, z +z\, d\star d-z \star d \star d \,\star)\flat
\\&= \mu\sharp((\star\, d\star z d\star-\star d \star \sin\?\theta\, d\theta\!\wedge \star) 
- (d\star z d-d\star \sin\?\theta\, d\theta \wedge) +z\, d\star d-z \star d \star d \,\star )\flat
\\&=\mu\sharp (-\star \sin\?\theta \,d\theta\star d \star-\star d \star \sin\?\theta\,d\theta\!\wedge\star
+\sin\?\theta\,d\theta \star d +d\star \sin\?\theta\,d\theta\wedge )\flat
\end{aligned}
\end{equation}
and \eqref{eq:starstar} and \eqref{eq:flatsharp} have also been used.
By \eqref{eq:wedgeanti} and \eqref{eq:star20},
\begin{equation}
  \label{eq:starsinwedge}
  \star\sin\?\theta\,d\theta\!\wedge (f d\phi+kd\theta) =-f,
\end{equation}
while by \eqref{eq:flat} and \eqref{eq:star1}
\begin{equation}
  \label{eq:starflat}
  \flat (u_\phi\myvec e_\phi  +u_\theta \myvec e_\theta)=u_\phi\sin^2\!\theta\,d\phi+u_\theta\,d\theta,
\qquad
\star \flat (u_\phi\myvec e_\phi  +u_\theta \myvec e_\theta)=u_\phi\sin\?\theta\,d\theta-\sin\?\theta\,u_\theta\,d\phi.
\end{equation}
Formulas~\eqref{eq:starsinwedge}, \eqref{eq:starflat}, and \eqref{eq:sphereip} show that
\begin{equation}\label{eq:starsinwedge2}
\begin{aligned}
  \star\sin\?\theta\,d\theta\!\wedge \flat (u_\phi\myvec e_\phi  +u_\theta \myvec e_\theta) 
&=  -u_\phi\sin^2\!\theta 
\\
\star\sin\?\theta\,d\theta\!\wedge \star\, \flat (u_\phi\myvec e_\phi  +u_\theta \myvec e_\theta) 
&=\sin\?\theta\,u_\theta .
\end{aligned}
\end{equation}
Combining \eqref{eq:starsinwedge2} with \eqref{eq:d0}, \eqref{eq:sharp}, and \eqref{eq:star1} yields
\begin{equation}
  \label{eq:Qb}
  \begin{aligned}
    \sharp\, d\star \sin\?\theta\,d\theta\!\wedge\flat (u_\phi\myvec e_\phi  +u_\theta \myvec e_\theta)
&=-\tppp{u_\phi}{\phi}\myvec e_\phi-\sin^2\!\theta\,\tppp{u_\phi}{\theta}\,\myvec e_\theta -2\sin\?\theta\cos\?\theta\,u_\phi \,\myvec e_\theta
\\
   -\sharp \star d\star \sin\?\theta\,d\theta\!\wedge\star\,\flat (u_\phi\myvec e_\phi  +u_\theta \myvec e_\theta)
&=\tppp{u_\theta}{\theta}\,\myvec e_\phi+\tfrac{\cos\?\theta}{\sin\?\theta}\,u_\theta\,\myvec e_\phi-\tppp{u_\theta}{\phi}\,\myvec e_\theta.
  \end{aligned}
\end{equation}
Also, by \eqref{eq:flat}, \eqref{eq:star1}, \eqref{eq:d1}, and \eqref{eq:star20},
\begin{equation}
\begin{aligned}
  \label{eq:stardstarflat}
  \star\, d\star\flat (u_\phi\myvec e_\phi  +u_\theta \myvec e_\theta)&=\tppp{u_\phi}{\phi}+\tppp{u_\theta}{\theta}+\tfrac{\cos\?\theta}{\sin\?\theta}\,u_\theta
\\
\star\,d \,\flat (u_\phi\myvec e_\phi  +u_\theta \myvec e_\theta)
&= -\sin\?\theta\,\tppp{u_\phi}{\theta}-2\cos\?\theta\,u_\phi+\tfrac1{\sin\?\theta}\tppp{u_\theta}{\phi}
\end{aligned}
\end{equation}
Combining \eqref{eq:stardstarflat} with \eqref{eq:sharp} and  \eqref{eq:star1} yields
\begin{equation}
  \label{eq:Qc}
  \begin{aligned}
    -\sharp\star\sin\?\theta\,d\theta\,\star d\star \flat (u_\phi\myvec e_\phi  +u_\theta \myvec e_\theta)
&=(\tppp{u_\phi}{\phi}+\tppp{u_\theta}{\theta}+\tfrac{\cos\?\theta}{\sin\?\theta}\,u_\theta)\myvec e_\phi
\\
\sharp\,\sin\?\theta\,d\theta\star d\,\flat (u_\phi\myvec e_\phi  +u_\theta \myvec e_\theta)
&=( -\sin^2\!\theta\,\tppp{u_\phi}{\theta}-2\sin\?\theta\cos\?\theta\,u_\phi+\tppp{u_\theta}{\phi})\myvec e_\theta .
  \end{aligned}
\end{equation}
Combining \eqref{eq:M1L2b}, \eqref{eq:Q}, \eqref{eq:Qb}, and \eqref{eq:Qc} shows that
\begin{equation}
  \label{eq:M1L2final}
\begin{aligned}
  \relax [M_1,L_2]&\lp\begin{smallmatrix}u_\phi\myvec e_\phi  +u_\theta \myvec e_\theta\\h\end{smallmatrix}\rp
\\&=\mu\lp\begin{smallmatrix} \lp 2\ppp{u_\theta}{\theta}+2\tfrac{\cos\?\theta}{\sin\?\theta}u_\theta\rp\myvec e_\phi
-\lp 2\sin^2\!\theta\ppp{u_\phi}{\theta}+4\sin\?\theta\cos\?\theta u_\phi\rp\myvec e_\theta\\0 
\end{smallmatrix}\rp
\end{aligned}
\end{equation}
It is easier to calculate the commutators of $M_2$ using the second form of that operator:
\begin{equation}
  \label{eq:M2L1}
\begin{aligned}
 \relax [M_2,L_1]&=-\lp\begin{smallmatrix} \mu^2(\frac12-\sin^2\!\theta)&0&2\mu\\0&\mu^2(\frac12-\sin^2\!\theta)&0
\\2\mu\sin^2\!\theta&0&\mu^2(\frac12-\sin^2\!\theta)\end{smallmatrix}\rp
\lp\begin{smallmatrix} 0&0&\tfr{\sin^2\!\theta}\prt_\phi\\0&0&\prt_\theta\\\prt_\phi&\frac1{\sin\?\theta}\prt_\theta\sin\?\theta&0\end{smallmatrix}\rp
\\&\quad\;+ \lp\begin{smallmatrix} 0&0&\tfr{\sin^2\!\theta}\prt_\phi\\0&0&\prt_\theta\\
\prt_\phi&\frac1{\sin\?\theta}\prt_\theta\sin\?\theta&0\end{smallmatrix}\rp 
\lp\begin{smallmatrix} \mu^2(\frac12-\sin^2\!\theta)&0&2\mu\\0&\mu^2(\frac12-\sin^2\!\theta)&0\\
2\mu\sin^2\!\theta&0&\mu^2(\frac12-\sin^2\!\theta)\end{smallmatrix}\rp
\\&=
\lp\begin{smallmatrix} 0&-\frac{2\mu}{\sin\?\theta}\prt_\theta \sin\?\theta&0\\2\mu\prt_\theta\sin^2\!\theta&0&-2\mu^2\sin\?\theta\cos\?\theta\\
0&-2\mu^2\sin\?\theta\cos\?\theta&0 \end{smallmatrix}\rp\end{aligned}
\end{equation}
and
\begin{equation}
  \label{eq:M2L2}
\begin{aligned}
\relax  [M_2,L_2]&=\lp\begin{smallmatrix}
    \mu^2(\frac12-\sin^2\!\theta)&0&2\mu\\0&\mu^2(\frac12-\sin^2\!\theta)&0\\2\mu\sin^2\!\theta&0&\mu^2(\frac12-\sin^2\!\theta) \end{smallmatrix}\rp 
\lp\begin{smallmatrix}  \phantom{-}0&\mu\tfrac{\cos\?\theta}{\sin\?\theta}&0\\-\mu\sin\?\theta\cos\?\theta &0&0\\\phantom{-}0&0&0\end{smallmatrix}\rp
\\&\qquad-\lp\begin{smallmatrix}  \phantom{-}0&\mu\tfrac{\cos\?\theta}{\sin\?\theta}&0\\-\mu\sin\?\theta\cos\?\theta &0&0\\\phantom{-}0&0&0\end{smallmatrix}\rp 
\lp\begin{smallmatrix}  \mu^2(\frac12-\sin^2\!\theta)&0&2\mu\\0&\mu^2(\frac12-\sin^2\!\theta)&0\\
2\mu\sin^2\!\theta&0&\mu^2(\frac12-\sin^2\!\theta)\end{smallmatrix}\rp
\\&=\lp\begin{smallmatrix} 0&0&0\\0&0&2\mu^2\sin\?\theta\cos\?\theta\\0&2\mu^2\sin\?\theta\cos\?\theta&0 \end{smallmatrix}\rp
\end{aligned}
\end{equation}
Adding \eqref{eq:M2L1} and \eqref{eq:M2L2} yields
\begin{equation*}
  [M_2,L]=\lp\begin{smallmatrix}  0&-\frac{2\mu}{\sin\/\theta}\prt_\theta\sin\?\theta&0\\2\mu\prt_\theta\sin^2\!\theta&0&0\\0&0&0\end{smallmatrix}\rp,
\end{equation*}
which implies that
\begin{equation}
  \label{eq:M2Lalt}
  [M_2,L]\lp\begin{smallmatrix} u_\phi\myvec e_\phi  +u_\theta \myvec e_\theta\\h \end{smallmatrix}\rp
=\mu\lp\begin{smallmatrix} -(2\ppp{u_\theta}{\theta}+2\frac{\cos\?\theta}{\sin\?\theta}\,u_\theta)\myvec e_\phi
+(2\sin^2\!\theta\ppp{u_\phi}{\theta}+4\sin\?\theta\cos\?\theta \,u_\phi) \myvec e_\theta\\0\end{smallmatrix}\rp.
\end{equation}
Combining \eqref{eq:M1L1}, \eqref{eq:M1L2final}, and \eqref{eq:M2Lalt} yields~\eqref{eq:LMML}.
\end{proof}

\section{Uniform Bound} 
\label{sec:unif}

\begin{proof}[Proof of the \cref{thm:unif}]
  In similar fashion to the equatorial beta-plane model, the equations \eqref{eq:euler} or \eqref{eq:euler-local} can be symmetrized by making the
  change of variables \cite[\S 2]{MR2487851}
$h=\tfrac{\lp 1+{\del \tilde h}/2\rp^2-1}\del$
and dividing the resulting equation for $\tilde h$ by $1+\frac{\del \tilde h}2$. After dropping the tilde, this yields
\begin{equation}
  \label{eq:eulermod}
\begin{aligned}
 \prt_t &\myvec u+\grad_{\?\myvec u}\myvec u+\tfrac h2\grad h=-\tfr\del \grad h+\tfrac{z}{\eps}\myvec u^\perp,
\\
\prt_t &h+\grad_{\?\myvec u}  h+\tfrac h2\div \myvec u=-\tfr\del\div \myvec u
\end{aligned}
\end{equation}
in place of \eqref{eq:euler}. In view of \eqref{eq:covderiv} and \eqref{eq:divstargrad}, \eqref{eq:eulermod} is symmetric hyperbolic with respect to the inner
product~\eqref{eq:ip}. Hence, by \cite[\S 16.1, Theorem 1.2, Proposition 1.3, and Proposition 1.4]{MR4703941}, which as noted near the beginning of
that section remain valid when the domain is a compact Riemannian manifold, the initial-value problem \eqref{eq:euler} or \eqref{eq:euler-local} with
initial data~$\lp\begin{smallmatrix}\myvec u_0\\h_0\end{smallmatrix}\rp$ satisfying \eqref{eq:initbnd} has a unique solution in $C^0([0,T];H^4)$ for
some positive $T$ that might depend on $\del$ and $\eps$ as well as on the bounds $C$ and $K$ in the assumptions of the theorem, and
\eqref{eq:unifbnd} holds with $T(C,K)$ replaced by that $T$. Hence there remains to show that $T$ can be taken independent of $\del$ and $\eps$.

Taking the inner product of $2\lp\begin{smallmatrix}\myvec u\\h\end{smallmatrix}\rp$ with equation \eqref{eq:eulermod} and using the
definition~\eqref{eq:L}\textendash\eqref{eq:L2} of $L$ and \eqref{eq:Lanti} yields
\begin{equation}
  \label{eq:forL2bnd}
\begin{aligned}
  \ddt \left\|\lp\begin{smallmatrix}\myvec u\\h\end{smallmatrix}\rp\right\|_{L^2}^2
&=-2\ip{\lp\begin{smallmatrix}\myvec u\\h\end{smallmatrix}\rp}{\lp\begin{smallmatrix}\grad_{\?\myvec u}\myvec u\\\grad_{\?\myvec u}h\end{smallmatrix}\rp}
-2\ip{\lp\begin{smallmatrix}\myvec u\\h\end{smallmatrix}\rp}{\lp\begin{smallmatrix}\frac h2\grad h\\\frac h2 \div u\end{smallmatrix}\rp}
+\tfrac{2}{\del}\ip{\lp\begin{smallmatrix}\myvec u\\h\end{smallmatrix}\rp}{L\lp\begin{smallmatrix}\myvec u\\h\end{smallmatrix}\rp}
\\&=-2\ip{\lp\begin{smallmatrix}\myvec u\\h\end{smallmatrix}\rp}{\lp\begin{smallmatrix}\grad_{\?\myvec u}\myvec u\\\grad_{\?\myvec u}h\end{smallmatrix}\rp}
-2\ip{\lp\begin{smallmatrix}\myvec u\\h\end{smallmatrix}\rp}{\lp\begin{smallmatrix}\frac h2\grad h\\\frac h2 \div u\end{smallmatrix}\rp}.
\end{aligned}
\end{equation}
Using the formulas~\eqref{eq:vgradf}, \eqref{eq:covderiv}, \eqref{eq:div} and~\eqref{eq:grad} for $\grad_{\?\myvec u}$, $\div u$, and $\grad h$,  the
definition~\eqref{eq:ip} of the inner product, and integration by parts therefore shows that
\begin{equation}
  \label{eq:L2bnd}
\ddt \left\|\lp\begin{smallmatrix}\myvec u\\h\end{smallmatrix}\rp\right\|_{L^2}^2
\le c \|\myvec u\|_{C^1}\left\|\lp\begin{smallmatrix}\myvec u\\h\end{smallmatrix}\rp\right\|_{L^2}^2.
\end{equation}
where here and later $c$ denotes a constant, which may be different in different occurrences, that depends on $\del$ and $\eps$ only through smooth
dependence on their ratio~$\mu$ that appears in the operator~$M$. The constants $c_j$ and $k$ will denote constants that also depend on $\del$ and
$\eps$ only through smooth dependence on~$\mu$, but which remain the same in all occurrences. It would not be very hard to obtain bounds for the form
$\widetilde c(1+\mu^p)$ for how these constants depend on $\mu$, which would yield bounds of that form for the constants~$T$ and $B$ in the statement
of the theorem. However, since such explicit bounds are not required for the theorem, and obtaining them requires longer calculations, we will just
note that the facts that $M$ contains only positive powers of $\mu$ and that all the large terms will drop out of the estimates will imply that all those
constants will be bounded for bounded values of $\mu$.

Next, letting $p$ be a positive integer, applying $M^p$ to \eqref{eq:eulermod}, writing $M^p$ applied an operator applied to
$\lp\begin{smallmatrix}\myvec u\\h\end{smallmatrix}\rp$ as that operator applied to
$M^p\lp\begin{smallmatrix}\myvec u\\h\end{smallmatrix}\rp$ plus the commutator of $M^p$ with that operator applied to
$\lp\begin{smallmatrix}\myvec u\\h\end{smallmatrix}\rp$, and using \eqref{eq:LMML} repeatedly yields
\begin{equation}
  \label{eq:forMbnd}
  \begin{aligned}
   \ddt M^p\lp\begin{smallmatrix}\myvec u\\h\end{smallmatrix}\rp
&=-M^p\lp\begin{smallmatrix}\grad_{\?\myvec u}&0\\0&\grad_{\?\myvec u}\end{smallmatrix}\rp
\lp\begin{smallmatrix}\myvec u\\ h\end{smallmatrix}\rp
+\tfrac1\del M^pL\lp\begin{smallmatrix}\myvec u\\h\end{smallmatrix}\rp
\\&=-\lp\begin{smallmatrix}\grad_{\?\myvec u}&0\\0&\grad_{\?\myvec u}\end{smallmatrix}\rp
M^p\lp\begin{smallmatrix}\myvec u\\ h\end{smallmatrix}\rp
-\left[M^p,\lp\begin{smallmatrix}\grad_{\?\myvec u}&0\\0&\grad_{\?\myvec u}\end{smallmatrix}\rp\right]\lp\begin{smallmatrix}\myvec u\\h\end{smallmatrix}\rp
+\tfrac1\del LM^p\lp\begin{smallmatrix}\myvec u\\h\end{smallmatrix}\rp.
  \end{aligned}
\end{equation}
Taking the inner product of $2M^p\lp\begin{smallmatrix}\myvec u\\h\end{smallmatrix}\rp$ with equation~\eqref{eq:forMbnd} and continuing in similar
fashion to the calculation of the time derivative of the $L^2$ norm in \eqref{eq:forL2bnd}\textendash\eqref{eq:L2bnd} yields
\begin{equation}
  \label{eq:Mbnd}
  \ddt\left\|M^p\lp\begin{smallmatrix}\myvec u\\h\end{smallmatrix}\rp\right\|_{L^2}^2\le 
c\lp \|\myvec u\|_{C^1}\left\|M^p\lp\begin{smallmatrix}\myvec u\\h\end{smallmatrix}\rp\right\|_{L^2}^2
+\left\|M^p\lp\begin{smallmatrix}\myvec u\\h\end{smallmatrix}\rp\right\|_{L^2} 
\left\| \left[M^p,\lp\begin{smallmatrix}\grad_{\?\myvec u}&0\\0&\grad_{\?\myvec u}\end{smallmatrix}\rp\right]
\lp\begin{smallmatrix}\myvec u\\h\end{smallmatrix}\rp \right\|_{L^2}\rp.
\end{equation}
Adding a constant times \eqref{eq:L2bnd} to \eqref{eq:Mbnd} with $p=2$ 
yields
\begin{equation}
  \label{eq:h4est}
\begin{aligned}
  \ddt &\lb k \left\|\lp\begin{smallmatrix}\myvec u\\h\end{smallmatrix}\rp\right\|_{L^2}^2
+\left\|M^2\lp\begin{smallmatrix}\myvec u\\h\end{smallmatrix}\rp\right\|_{L^2}^2\rb
\\&\le c\lp \|\myvec u\|_{C^1}\lb k \left\|\lp\begin{smallmatrix}\myvec u\\h\end{smallmatrix}\rp\right\|_{L^2}^2 
+ \left\|M^2\lp\begin{smallmatrix}\myvec u\\h\end{smallmatrix}\rp\right\|_{L^2}^2\rb 
+\left\|M^2\lp\begin{smallmatrix}\myvec u\\h\end{smallmatrix}\rp\right\|_{L^2} 
\left\| \left[M^2,\lp\begin{smallmatrix}\grad_{\?\myvec u}&0\\0&\grad_{\?\myvec u}\end{smallmatrix}\rp\right]
\lp\begin{smallmatrix}\myvec u\\h\end{smallmatrix}\rp \right\|_{L^2}\rp.
\end{aligned}
\end{equation}
Now use the standard Sobolev estimate (e.g. \cite[p. 3893]{MR3478875})
\begin{equation}\label{eq:c1est}
  \|\myvec u\|_{C^1}\le c\|\myvec u\|_{H^4}
\end{equation}
and the commutator estimate \cite[(2.1.12)]{MR3478875}
\begin{equation}
  \label{eq:commest}
  \|[ (-\Delta)^{\!\frac s2},\grad_{\?\myvec u}]\myvec u\|\le c\|\myvec u\|_{C^1}\|\myvec u\|_{H^s}
\end{equation}
where the mollifier in~\cite{MR3478875} has been omitted
since the estimate remains valid without it, and the estimate has been written for vector fields rather than for one-forms in light of the
equivalence~\eqref{eq:lap=hodge:u}. The definition~\eqref{eq:M}\textendash\eqref{eq:M2} of $M$ implies that
\begin{equation}\label{eq:M2H4}
M^2-\lp\begin{smallmatrix}\Delta^{\?2}&0\\0&\Delta^{\?2}\end{smallmatrix}\rp
\text{ is a non-diagonal second-order operator.}
\end{equation}
Since the derivation of~\eqref{eq:commest} in \cite{MR3478875} remains valid when the commutator is applied to a
function rather than a one-form or vector field, \eqref{eq:commest} implies that
\begin{equation}
  \label{eq:comm4}
    \left\| \left[\lp\begin{smallmatrix}\Delta^{\?2}&0\\0&\Delta^{\?2}\end{smallmatrix}\rp,
\lp\begin{smallmatrix}\grad_{\?\myvec u}&0\\0&\grad_{\?\myvec u}\end{smallmatrix}\rp\right]
\lp\begin{smallmatrix}\myvec u\\h\end{smallmatrix}\rp \right\|_{L^2}
\le c\|\myvec u\|_{C^1}\left\|\lp\begin{smallmatrix}\myvec u\\h\end{smallmatrix}\rp \right\|_{H^4}.
\end{equation}
By \eqref{eq:M2H4}, the remaining terms in $M^2$ yield terms in
$\left[M^2,\lp\begin{smallmatrix}\grad_{\?\myvec u}&0\\0&\grad_{\?\myvec u}\end{smallmatrix}\rp\right]$ in which
at most three derivatives are applied to two factors of the dependent variables. Hence one of those factors contains at most one derivative,
so pulling it out in sup norm from the $L^2$ norm of the commutator yields the estimate
\begin{equation}
  \label{eq:comm3}
\left\|\text{remaining terms of $\left[M^2,\lp\begin{smallmatrix}\grad_{\?\myvec u}&0\\0&\grad_{\?\myvec u}\end{smallmatrix}\rp\right]
\lp\begin{smallmatrix}\myvec u\\h\end{smallmatrix}\rp$}\right\|_{L^2}
\le  c\left\|\lp\begin{smallmatrix}\myvec u\\h\end{smallmatrix}\rp\right\|_{C^1}
\left\|\lp\begin{smallmatrix}\myvec u\\h\end{smallmatrix}\rp\right\|_{H^3}.
\end{equation}
Combining \eqref{eq:comm4}\textendash\eqref{eq:comm3} and using \eqref{eq:c1est} shows that
\begin{equation}
  \label{eq:commest2}
  \left\| \left[M^2,\lp\begin{smallmatrix}\grad_{\?\myvec u}&0\\0&\grad_{\?\myvec u}\end{smallmatrix}\rp\right]
\lp\begin{smallmatrix}\myvec u\\h\end{smallmatrix}\rp \right\|_{L^2}
\le c \left\|\lp\begin{smallmatrix}\myvec u\\h\end{smallmatrix}\rp\right\|_{H^4}^2.
\end{equation}
Moreover, in view of \eqref{eq:M2H4} and the Sobolev estimate
$\left\|\lp\begin{smallmatrix}\myvec u\\h\end{smallmatrix}\rp\right\|_{H^2}^2\le
c \left\|\lp\begin{smallmatrix}\myvec u\\h\end{smallmatrix}\rp\right\|_{H^4} \left\|\lp\begin{smallmatrix}\myvec u\\h\end{smallmatrix}\rp\right\|_{L^2}$,
for $k$ sufficiently large 
the norm $[k\|\cdot\|_{L^2}^2+\|M^2\cdot\|_{L^2}^2]^{\frac12}$ is equivalent to the
$H^4$ norm, i.e.,
\begin{equation}\label{eq:equivnorm}
c_1\|\lp\begin{smallmatrix}\myvec u\\h\end{smallmatrix}\rp\|_{H^4}\le 
[k\left\|\lp\begin{smallmatrix}\myvec u\\h\end{smallmatrix}\rp\right\|_{L^2}^2
+\left\|M^2\lp\begin{smallmatrix}\myvec u\\h\end{smallmatrix}\rp\right\|_{L^2}^2]^{\frac12}
\le c_2\|\lp\begin{smallmatrix}\myvec u\\h\end{smallmatrix}\rp\|_{H^4}.
\end{equation}
Substituting \eqref{eq:c1est}, \eqref{eq:commest2}, and the left inequality of \eqref{eq:equivnorm} into \eqref{eq:h4est} 
with $k$ sufficiently large therefore yields
\begin{equation}
  \label{eq:h4est2}
  \ddt \lb k \left\|\lp\begin{smallmatrix}\myvec u\\h\end{smallmatrix}\rp\right\|_{L^2}^2
+\left\|M^2\lp\begin{smallmatrix}\myvec u\\h\end{smallmatrix}\rp\right\|_{L^2}^2\rb
\le c_3\lb k \left\|\lp\begin{smallmatrix}\myvec u\\h\end{smallmatrix}\rp\right\|_{L^2}^2
+\left\|M^2\lp\begin{smallmatrix}\myvec u\\h\end{smallmatrix}\rp\right\|_{L^2}^2\rb^{\frac32}.
\end{equation}
Integrating the differential inequality~\eqref{eq:h4est2} with the initial bound
\begin{equation*}
\lb k \left\|\lp\begin{smallmatrix}\myvec u\\h\end{smallmatrix}\rp\right\|_{L^2}^2
+\left\|M^2\lp\begin{smallmatrix}\myvec u\\h\end{smallmatrix}\rp\right\|_{L^2}^2\rb_{t=0}\le c_2K
\end{equation*}
obtained from~\eqref{eq:initbnd} and the right inequality in~\eqref{eq:equivnorm}
yields a uniform bound for $\lb k \left\|\lp\begin{smallmatrix}\myvec u\\h\end{smallmatrix}\rp\right\|_{L^2}^2
+\left\|M^2\lp\begin{smallmatrix}\myvec u\\h\end{smallmatrix}\rp\right\|_{L^2}^2\rb$ on a uniform time interval. Using the left inequality of 
\eqref{eq:equivnorm} once more then yields~\eqref{eq:unifbnd}.
\end{proof}

\section{Convergence}
\label{sec:conv}

\begin{proof}[Proof of  \cref{thm:converge} and \cref{thm:threescale}]
  For \cref{thm:converge} the assumption that $\mu$ is fixed implies that \eqref{eq:mucond} holds with $C\eqdef\mu$.  For~\cref{thm:threescale} the
  definitions \eqref{eq:mu} and \eqref{eq:nu} imply that $\mu=\eps\nu$, so \eqref{eq:nuconst} implies that \eqref{eq:mucond} holds with $C=\nu$ for
  $\eps\le1$. In both cases, \eqref{eq:uinit3}\textendash\eqref{eq:unit3bnd} imply that \eqref{eq:initbnd} holds with $K$ replaced by $3K$ for
  $\eps\le1$. Hence for $\eps\le1$ the conclusions of \cref{thm:unif}, and in particular~\eqref{eq:unifbnd}, hold with the above values of $C$ and
  $K$.

For \cref{thm:converge}, assumption~\eqref{eq:wellprep0} together with the definition~\eqref{eq:mu} of $\mu$ ensure that the terms on the right side of
\eqref{eq:eulermod} vanish at time zero, which implies that the initial value of $\prt_t\! \lp\begin{smallmatrix}\myvec u\\h\end{smallmatrix}\rp$
calculated from that system is uniformly bounded. For \cref{thm:threescale} with the scaling \eqref{eq:nuconst} the terms of order $\tfr{\eps^2}$ in
\eqref{eq:eulermod} vanish at time zero iff 
\begin{equation}\label{eq:initepsm2}
\grad h_{0,0}=0\quad\text{and}\quad \div\myvec u_{0,0}=0,
\end{equation}
while the terms of order $\tfr{\eps}$ vanish iff
\begin{equation}\label{eq:initepsm1}
-\grad h_{0,1}+\nu \myvec u_{0,0}^\perp=0\quad\text{and}\quad \div \myvec u_{0,1}=0.
\end{equation}
The second equation of \eqref{eq:initepsm2} and the first equation of \eqref{eq:initepsm1} have the form \eqref{eq:wellprep0} with 
$h_{0,0}$ replaced by $h_{0,1}$ and $\mu$ replaced by $\nu$. \cref{lem:N(L)} together with the remaining conditions of
\eqref{eq:initepsm2}\textendash\eqref{eq:initepsm1} therefore imply that for the case of \cref{thm:threescale} the initial value of
$\prt_t\! \lp\begin{smallmatrix}\myvec u\\h\end{smallmatrix}\rp$ calculated from \eqref{eq:eulermod} will be uniformly bounded iff
assumption~\eqref{eq:unit3conds} of that theorem holds.  

Since the initial value of $\prt_t\!\lp\begin{smallmatrix}\myvec u\\h\end{smallmatrix}\rp$ is therefore uniformly bounded for both theorems, we now
proceed to obtain energy estimates for those time derivatives. Differentiating~\eqref{eq:eulermod} with respect to $t$ yields
\begin{equation}
  \label{eq:dteulermod}
\begin{aligned}
 \prt_t &\myvec u_t+\grad_{\?\myvec u}\myvec u_t+\grad_{\?\myvec u_t}\myvec u
+\tfrac h2\grad h_t+\tfrac {h_t}2\grad h=-\tfr\del \grad h_t+\tfrac{z}{\eps}\myvec u_t^\perp,
\\
\prt_t &h_t+\grad_{\?\myvec u}  h_t+\grad_{\?\myvec u_t}  h+\tfrac h2\div \myvec u_t+\tfrac {h_t}2\div \myvec u=-\tfr\del\div \myvec u_t,
\end{aligned}
\end{equation}
where as usual a subscript $t$ denotes that the variable has been differentiated with respect to $t$. Taking the inner product of 
$2 \lp\begin{smallmatrix}\myvec u_t\\h_t\end{smallmatrix}\rp$ with the equation \eqref{eq:dteulermod} yields, in similar fashion to the derivation of
\eqref{eq:forL2bnd}\textendash\eqref{eq:L2bnd}
\begin{equation}
  \label{eq:dtL2bnd}
\begin{aligned}
  \ddt \left\|\lp\begin{smallmatrix}\myvec u_t\\h_t\end{smallmatrix}\rp\right\|_{L^2}^2
&=-2\ip{\lp\begin{smallmatrix}\myvec u_t\\h_t\end{smallmatrix}\rp}{\lp\begin{smallmatrix}\grad_{\?\myvec u}\myvec u_t\\\grad_{\?\myvec u}h_t\end{smallmatrix}\rp}
-2\ip{\lp\begin{smallmatrix}\myvec u_t\\h_t\end{smallmatrix}\rp}{\lp\begin{smallmatrix}\grad_{\?\myvec u_t}\myvec u\\\grad_{\?\myvec u_t}h\end{smallmatrix}\rp}
\\&\quad
-2\ip{\lp\begin{smallmatrix}\myvec u_t\\h_t\end{smallmatrix}\rp}{\lp\begin{smallmatrix}\frac h2\grad h_t\\\frac h2 \div u_t\end{smallmatrix}\rp}
-2\ip{\lp\begin{smallmatrix}\myvec u_t\\h_t\end{smallmatrix}\rp}{\lp\begin{smallmatrix}\frac {h_t}2\grad h\\\frac {h_t}2 \div u\end{smallmatrix}\rp}
+\tfrac{2}{\del}\ip{\lp\begin{smallmatrix}\myvec u_t\\h_t\end{smallmatrix}\rp}{L\lp\begin{smallmatrix}\myvec u_t\\h_t\end{smallmatrix}\rp}
\\&=
-2\ip{\lp\begin{smallmatrix}\myvec u_t\\h_t\end{smallmatrix}\rp}{\lp\begin{smallmatrix}\grad_{\?\myvec u}\myvec u_t\\\grad_{\?\myvec u}h_t\end{smallmatrix}\rp}
-2\ip{\lp\begin{smallmatrix}\myvec u_t\\h_t\end{smallmatrix}\rp}{\lp\begin{smallmatrix}\grad_{\?\myvec u_t}\myvec u\\\grad_{\?\myvec u_t}h\end{smallmatrix}\rp}
\\&\quad
-2\ip{\lp\begin{smallmatrix}\myvec u_t\\h_t\end{smallmatrix}\rp}{\lp\begin{smallmatrix}\frac h2\grad h_t\\\frac h2 \div u_t\end{smallmatrix}\rp}
-2\ip{\lp\begin{smallmatrix}\myvec u_t\\h_t\end{smallmatrix}\rp}{\lp\begin{smallmatrix}\frac {h_t}2\grad h\\\frac {h_t}2 \div u\end{smallmatrix}\rp}
\\&\le
c \|\myvec u\|_{C^1}\left\|\lp\begin{smallmatrix}\myvec u_t\\h_t\end{smallmatrix}\rp\right\|_{L^2}^2.
\end{aligned}
\end{equation}
since the new terms involving $\grad_{\myvec u_t}$ or $\frac{h_t}2$ can be estimated directly by that bound. 
In view of \eqref{eq:c1est}, \eqref{eq:unifbnd}, and the uniform bound on the initial value of 
$\left\|\lp\begin{smallmatrix}\myvec u_t\\h_t\end{smallmatrix}\rp\right\|_{L^2}$ obtained above, \eqref{eq:dtL2bnd} implies that
\begin{equation}
  \label{eq:intdtL2bnd}
  \left\|\lp\begin{smallmatrix}\myvec u_t\\h_t\end{smallmatrix}\rp\right\|_{L^2}^2
\le c e^{c t}
\end{equation}
on the uniform time interval provided by \cref{thm:unif}. 

Let $T$ equal $T(\mu,3K)$ for \cref{thm:converge} or $T(\nu,3K)$ for \cref{thm:threescale}. As in the classical theory of singular limits, the uniform
estimates~\eqref{eq:unifbnd} and \eqref{eq:intdtL2bnd} imply by Ascoli's theorem plus the weak-$*$ compactness of $L^\infty([0,T];H^4)$ that for every
sequence of values of $\del,\eps$ tending to zero while satisfying \eqref{eq:muconst} or \eqref{eq:nuconst} a subsequence of
$\lp\begin{smallmatrix}\myvec u\\h\end{smallmatrix}\rp$ converges weak-* in $L^\infty([0,T];H^4)$ and strongly in $C^0([0,T];L^2)$. By the standard
interpolation inequality $\|\cdot\|_{H^r}\le C(r,s)\|\cdot\|_{H^s}^{\frac rs}\|\cdot\|_{L^2}^{1-\frac rs}$, the $C^0([0,T];L^2)$ convergence together
with the uniform~$C^0([0,T];H^4)$ bound~\eqref{eq:unifbnd} yield convergence also in $C^0([0,T];H^r)$ for $r<4$. Define
$\lp\begin{smallmatrix}\overline{\myvec u}\\\overline{h}\end{smallmatrix}\rp\eqdef \lim \lp\begin{smallmatrix}\myvec u\\h\end{smallmatrix}\rp$,
where for now the limit is taken along the above subsequence.

Multiplying~\eqref{eq:eulermod} by $\del$ yields
\begin{equation}
  \label{eq:deleulermod}
\begin{aligned}
\del\lb\prt_t \lp\begin{smallmatrix}\myvec u\\h\end{smallmatrix}\rp 
+ \lp\begin{smallmatrix}\grad_{\?\myvec u}\myvec u\\ \grad_{\?\myvec u}h\end{smallmatrix}\rp
+\tfrac h2 \lp\begin{smallmatrix}\grad h\\ \div \myvec u\end{smallmatrix}\rp\rb
=\lp\begin{smallmatrix} -\grad h+\mu z\myvec u^\perp\\ -\div \myvec u\end{smallmatrix}\rp
=L\lp\begin{smallmatrix}\myvec u\\h\end{smallmatrix}\rp.
\end{aligned}  
\end{equation}
By the above bounds, taking the limit of \eqref{eq:deleulermod} yields
$L\lp\begin{smallmatrix}\overline{\myvec u}\\\overline{h}\end{smallmatrix}\rp=0$ if $\mu$ is fixed, or
$L_1\lp\begin{smallmatrix}\overline{\myvec u}\\\overline{h}\end{smallmatrix}\rp=0$ if $\nu$ is fixed, since in the latter case $\mu$ also tends to
zero. In the former case the equation~$L\lp\begin{smallmatrix}\overline{\myvec u}\\\overline{h}\end{smallmatrix}\rp=0$ implies by~\cref{lem:N(L)} that
the limit~$\lp\begin{smallmatrix}\overline{\myvec u}\\\overline{h}\end{smallmatrix}\rp$ has the form
\begin{equation}
  \label{eq:lim1}
  \overline{\myvec u}=\overline{u}_\phi(t,\theta)\myvec e_\phi\qquad\text{and}\qquad 
\overline h=\overline h(t,\theta)\eqdef c(t)-\mu\int_0^\theta \cos\?\beta\,\sin\?\beta\, \overline{u}_\phi(t,\beta)\,d\beta.
\end{equation}
In the latter case the equation~$L_1\lp\begin{smallmatrix}\overline{\myvec u}\\\overline{h}\end{smallmatrix}\rp=0$ implies that $\overline h$ is
independent of both $\phi$ and $\theta$ and that $\div\overline{\myvec u}=0$.  One might expect that the next step for this latter case would be to
apply $P(0)^*$ to~\eqref{eq:eulermod}, which by the first equation of~\eqref{eq:P0L10} eliminates the term of size $O(\frac1\del)$, so that
multiplying the result by $\eps$ and taking the limit yields another equation that
$\lp\begin{smallmatrix}\overline{\myvec u}\\\overline{h}\end{smallmatrix}\rp$ satisfies. However, by the second equation of~\eqref{eq:P0L10} this
only yields $\Avphi\overline u_\theta=0$.  Hence we will instead apply $\perp$ and then $\div$ to the first equation in~\eqref{eq:eulermod}, which by
the first equation in~\eqref{eq:divgradperp0} also eliminates the term of size $O(\frac1\del)$ there. Hence, in view of the second equation
in~\eqref{eq:divgradperp0} and formula~\eqref{eq:div} for the divergence, multiplying the result by $\eps$ and taking the limit yields
$0=\div (\cos\?\theta\, \overline{\myvec u})=\cos\?\theta \div\overline{\myvec u}-\sin\?\theta\,\overline u_\theta$. Since we already saw that
$\div\overline{\myvec u}=0$ in this latter case, $\overline u_\theta=0$, and since $\div\overline{\myvec u}=0$ we then obtain that $\overline u_\phi$
is independent of $\phi$. Thus, in the case in which $\nu$ is fixed we obtain \eqref{eq:lim1} with $\mu$ set equal to its limit value zero.

By \eqref{eq:PL0}, applying $P(\mu)^*$ to \eqref{eq:eulermod} yields
\begin{equation}
  \label{eq:Pem}
\begin{aligned}
\prt_t P(\mu)^*       \lp\begin{smallmatrix}\myvec u\\h\end{smallmatrix}\rp 
+ P(\mu)^*\lb\lp\begin{smallmatrix}\grad_{\?\myvec u}\myvec u\\ \grad_{\?\myvec u}h\end{smallmatrix}\rp
+\tfrac h2 \lp\begin{smallmatrix}\grad h\\ \div \myvec u\end{smallmatrix}\rp\rb=\lp\begin{smallmatrix}\myvec 0\\0\end{smallmatrix}\rp.
\end{aligned}
\end{equation}
Since the large terms have been eliminated, the convergence obtained above suffices to take the limit of~\eqref{eq:Pem} as $\del,\eps\to0$, which yields
\begin{equation}
  \label{eq:limPem}
\begin{aligned}
\prt_t P(\mu)^*       \lp\begin{smallmatrix}\overline{\myvec u}\\\overline{h}\end{smallmatrix}\rp 
+ P(\mu)^*\lb\lp\begin{smallmatrix}\grad_{\?\overline{\myvec u}}\overline{\myvec u}\\ \grad_{\?\overline{\myvec u}}\overline{h}\end{smallmatrix}\rp
+\tfrac {\overline h}2 \lp\begin{smallmatrix}\grad \overline{h}\\ \div \overline{\myvec u}\end{smallmatrix}\rp\rb
=\lp\begin{smallmatrix}\myvec 0\\0\end{smallmatrix}\rp,
\end{aligned}
\end{equation}
where $\mu$ is replaced by zero in the case when $\nu$ rather than $\mu$ is held fixed. Moreover, \eqref{eq:lim1} together with the formulas
\eqref{eq:vgradf}, \eqref{eq:covderiv}, \eqref{eq:div}, and \eqref{eq:grad} for $\grad_{\?\overline{\myvec u}}$, $\div$ and $\grad$ imply that the only
nonzero terms in $\lb\lp\begin{smallmatrix}\grad_{\?\overline{\myvec u}}\overline{\myvec u}\\ \grad_{\?\overline{\myvec u}}\overline{h}\end{smallmatrix}\rp
+\tfrac {\overline h}2 \lp\begin{smallmatrix}\grad \overline{h}\\ \div \overline{\myvec u}\end{smallmatrix}\rp\rb$ are multiples of $\myvec e_\theta$,
which are eliminated by $P(\mu)^*$ or $P(0)^*$. Using \eqref{eq:lim1} together with the formula~\eqref{eq:Pstar} for $P(\mu)^*$ therefore shows that
\eqref{eq:limPem} reduces to
$\prt_t \lp\begin{smallmatrix}\overline{\myvec u}
-\mu \myvec e_\phi\frac{\cos\?\theta}{\sin^2\!\theta}\widehat I_\theta(\sin\?\theta \overline h)\\\overline h\end{smallmatrix}\rp
=\lp\begin{smallmatrix}\myvec 0\\0\end{smallmatrix}\rp$,
which simplifies to 
$\prt_t \lp\begin{smallmatrix}\overline{\myvec u}\\\overline h\end{smallmatrix}\rp
=\lp\begin{smallmatrix}\myvec 0\\0\end{smallmatrix}\rp$ even when $\mu$ is not replaced by zero. 
Together with~\eqref{eq:lim1}, this yields the conclusion of the theorems for the limit taken along the subsequence.
Since the limit is unique, convergence holds without restricting to subsequences.
\end{proof}

\appendix
\section{Review of Differential Geometry on the Sphere}

After choosing the length scale so that the radius of the sphere equals one, the sphere can be described by the spherical coordinates
\begin{equation}
  \label{eq:polar}
  x=\cos\?\phi\sin\?\theta, \quad
  y=\sin\?\phi\sin\?\theta, \quad
  z=\cos\?\theta,
\end{equation}
where $\phi$ is the longitude and $\theta$ the co-latitude, i.e., the angle from the North pole. In order to describe the velocity~$\myvec u$ of a
fluid moving on the sphere, and its derivatives, in invariant terms, we use the formalism of differential geometry.

\subsection{Local coordinates of vectors on the sphere}

A vector at a point on a general $d$-dimensional manifold is an element of the tangent space at that
point, which can be viewed as the set of equivalence classes of curves on the manifold that pass through the point and agree there to
first order. Since the coordinates of the sphere are $(\phi,\theta)$, we will let $\myvec e_{\phi}$ denote the vector in the direction of increasing longitude,
i.e.  the equivalence class of the curve~$(\phi_0+t,\theta_0)$, and let $\myvec e_{\theta}$ denote the vector in the direction of increasing
co-latitude, i.e.  the equivalence class of the curve~$(\phi_0,\theta_0+t)$. Although these vectors, like the coordinate~$\phi$, are not well defined
at the poles, that apparent difficulty will not cause any actual problem because those two points form a set of measure zero on the sphere. A general
vector on the sphere then has the form
\begin{equation}
  \label{eq:ucoords}
  \myvec u=u_{\phi}\myvec e_\phi+u_\theta \myvec e_\theta.
\end{equation}
Differentiating \eqref{eq:polar} shows that, considered as vectors in $\mathbb R^3$,
\begin{equation}
  \label{eq:ephi-etheta}
  \myvec e_\phi=\lp\begin{smallmatrix} -\sin\?\phi\sin\?\theta \\ \cos\?\phi\sin\?\theta\\0\end{smallmatrix}\rp,
\qquad \myvec e_\theta =\lp\begin{smallmatrix} \cos\?\phi\cos\?\theta\\ \sin\?\phi\cos\?\theta\\ -\sin\?\theta \end{smallmatrix}\rp.
\end{equation}

\subsection{Vector fields and covariant differentiation}
A vector field is an element of the tangent space obtained by choosing a tangent vector at each point of the
manifold. A vector field~$\myvec v$ acts on the space of functions defined on the manifold by $\grad_{\myvec v} f\eqdef \sum_j v_j\prt_{p_j}f$. 
For the sphere this becomes
\begin{equation}
  \label{eq:vgradf}
  \grad_{\?\myvec u} f=u_{\phi}\prt_\phi f+u_\theta \prt_\theta f. 
\end{equation}

When a manifold is embedded in Euclidean space then the inner product of two tangent vectors can be determined by
\begin{equation}
  \label{eq:veclen}
  g(\myvec u,\myvec v)\eqdef 
\lp \ddt X(p_1+u_1 t,\ldots p_d+u_dt)\eval{t=0}\rp\cdot
\lp \ddt X(p_1+v_1 t,\ldots p_d+v_dt)\eval{t=0}\rp
\end{equation}
where $X(p)$ denotes the Euclidean space point corresponding to a
point with local coordinates $\myvec p$, the dot product on the right side
is the Euclidean inner product, and $g$ denotes the inner product on the tangent space of the manifold.
For the coordinates $(\phi,\theta)$ on the sphere this yields, in view of \eqref{eq:ephi-etheta},
\begin{equation}\label{eq:g}
g(u_\phi \myvec e_\phi+u_\theta \myvec e_\theta, v_\phi \myvec e_\phi+v_\theta \myvec e_\theta)=\sin^2\!\theta \,u_\phi v_\phi+u_\theta v_\theta.
\end{equation}
In particular, the ``metric tensor'' is
\begin{equation}
  \label{eq:sphereip}
 \begin{pmatrix}
    g_{\phi,\phi}&g_{\phi,\theta}\\ g_{\theta,\phi}&g_{\theta,\theta}
  \end{pmatrix}
\eqdef
\begin{pmatrix}
  g(\myvec e_\phi,\myvec e_\phi)&g(\myvec e_\phi,\myvec e_\theta)\\
g(\myvec e_\theta,\myvec e_\phi)&g(\myvec e_\theta,\myvec e_\theta)
\end{pmatrix}
=
\begin{pmatrix}
  \sin^2\!\theta&0\\0&1
\end{pmatrix}.
\end{equation}

In order to differentiate a vector field it is necessary to take into
account not only the changes in its components but also the change of
direction of unit vectors. In local coordinates $\prt_{p_j}$ the
derivative of a vector field can be described in terms of the
Christoffel symbols $\Gamma^{k}_{ij}$, which are characterized by the
property that $\grad_{\myvec e_i}\myvec e_j=\sum_k \Gamma^k_{ij}\myvec e_k$.
By the Leibnitz rule for differentiation this implies that
\begin{equation}
  \label{eq:dvu}
  \grad_{\myvec v}\myvec u=\sum_{i,j}  v^i\tppp{u^j}{p_i}\myvec e_j+
\sum_{i,j,k} v^iu^j\Gamma^k_{ij}\myvec e_k.
\end{equation}
The rule for differentiating vectors extends to a rule for
differentiating arbitrary tensors by the requirements of linearity,
compatibility with tensor contraction, and the Leibnitz product rule.
On a Riemannian manifold the Christoffel symbols are uniquely determined by the requirement 
that the metric be invariant, i.e., that its derivative vanishes. This requirement yields (e.g. \cite[\S86]{MR143451}, \cite[pp. 96\textendash97]{MR4875093})
\begin{equation}
  \label{eq:gamg}
  \Gamma^c_{ab}=\tfrac12\sum_d g^{dc}(\prt_a g_{bd}+\prt_b g_{ad}-\prt_d g_{ab}),
\end{equation}
where the matrix $g^{dc}$ is the inverse of the matrix $g_{dc}$ of the metric tensor.
For the sphere both $g$ and its inverse
\begin{equation}
  \label{eq:ginv}
   \begin{pmatrix}
    g^{\phi,\phi}&g^{\phi,\theta}\\ g^{\theta,\phi}&g^{\theta,\theta}
  \end{pmatrix}
=
\begin{pmatrix}
  \tfrac1{\sin^2\!\theta}&0\\0&1
\end{pmatrix},
\end{equation}
are diagonal, and $\prt_\alpha g_{\beta\gamma}$ is nonzero only when
$\alpha=\theta$ and $\beta=\gamma=\phi$, in which case $\prt_\theta g_{\phi\phi}=2\sin\?\theta\cos\?\theta$.
Together with \eqref{eq:gamg}\textendash\eqref{eq:ginv}, this implies that
the only nonzero components of the Christoffel tensor for the sphere are
\begin{equation*}
  \Gamma^\phi_{\phi\theta}=\Gamma^\phi_{\theta\phi}=\cot\?\theta,
\qquad\text{and}\qquad \Gamma^\theta_{\phi\phi}=-\sin\?\theta\cos\?\theta.
\end{equation*}
By \eqref{eq:dvu}, the formula for covariant differentiation of a vector field 
on the sphere is therefore
\begin{equation}
  \label{eq:covderiv}
\begin{aligned}
  \grad_{\myvec v}\myvec u&=
  \lp
  v_\theta\tppp{u_\theta}{\theta}+v_\phi\tppp{u_\theta}{\phi}-
  v_\phi u_\phi\sin\?\theta\cos\?\theta\rp\!
  \myvec e_\theta
\\&\qquad
+ 
\lp v_\theta\tppp{u_\phi}{\theta}+v_\phi\tppp{u_\phi}{\phi}
+\lb v_\theta u_\phi+ v_\phi u_\theta\rb \cot\?\theta\rp\!
  \myvec e_\phi.
\end{aligned}
\end{equation}

\subsection{Differential forms and musical isomorphisms}

One-forms are the duals to vector fields. On the sphere a one-form is an expression $\alpha\eqdef a d\phi+b d\theta$, whose value on a vector field
$\myvec v\eqdef c \myvec e_\phi+d \myvec e_\theta$ is
\begin{equation}\label{eq:dual}
  \alpha[\myvec v]=(a d\phi+b d\theta)[c \myvec e_\phi+d \myvec e_\theta]=a c+b d.
\end{equation}
Two-forms on the sphere are expressions $\beta\eqdef f d\phi\wedge d\theta$. They can be constructed from one-forms in two ways. First, they can be
constructed by using the wedge product $\wedge$, which satisfies the anti-symmetry properties
\begin{equation}\label{eq:wedgeanti}
  d\theta\wedge d\phi=-d\phi\wedge d\theta,\quad d\phi\wedge d\phi=0=d\theta\wedge d\theta.
\end{equation}
The formulas in~\eqref{eq:wedgeanti} together with linearity determine the wedge product of any two one-forms. Second, a two form can be obtained from
a one-form via differentiation, according to the rule
\begin{equation}\label{eq:d1}
  d[f d\alpha]=\tppp{f}{\phi}d\phi\wedge \alpha+\tppp{f}{\theta} d\theta\wedge \alpha.
\end{equation}
Similarly, a function will also be called a zero-form, and applying $d$ to a zero-form yields a one-form via
\begin{equation}\label{eq:d0}
  df=\tppp{f}{\phi} d\phi +\tppp{f}{\theta}d\theta.
\end{equation}
On the other hand, since the sphere is two-dimensional there are no three-forms, so applying $d$ to any two-form yields zero, i.e.,
\begin{equation}
  \label{eq:d2}
  d[f d\phi\wedge d\theta]=0.
\end{equation}
The number zero, one, or two preceding the word form is called the degree of the form.

The ``musical'' operators $\flat$ (flat) and $\sharp$ (sharp) convert vector fields to one-forms and vice-versa, respectively, via the rules
\cite[pp. 27\textendash28]{MR1468735} 
\begin{equation}\label{eq:music}
(\flat[\myvec u])[\myvec v]\eqdef g(\myvec u,\myvec v)\qquad\text{and}\qquad \sharp\flat[\myvec u]=\myvec u.
\end{equation}
In view of
\eqref{eq:sphereip} and \eqref{eq:dual}, for the sphere this yields \cite[A.4]{MR2998607}
\begin{align}
\label{eq:flat}
  \flat[f \myvec e_\phi]&=f \sin^2\!\theta d\phi,& \flat[f \myvec e_\theta]&=f d\theta
\\
\label{eq:sharp}
  \sharp[f d\phi]&= \tfrac{f}{\sin^2\!\theta}\myvec e_\phi, & \sharp[f d\theta]&=f \myvec e_\theta,
\end{align}
which are extended to all vector fields and one-forms, respectively, by linearity, and imply that 
\begin{equation}\label{eq:flatsharp}
\text{$\flat\sharp$ is also the identity operator.}
\end{equation}

\subsection{Star operator and Hodge Laplacian}

The star operator $\star$ maps functions into two-forms and vice-versa, and maps one-forms into other one-forms, via linearity plus the
rules~\cite[A.4]{MR2998607}
\begin{align}
\label{eq:star20}
  \star[f d\phi\wedge d\theta]&=\tfrac{f}{\sin\?\theta}, & \star[f]&=f\sin\?\theta d\phi\wedge d\theta
\\\
  \star[f d\phi]&=\tfrac{f}{\sin\?\theta}d\theta, & \star[f d\theta]&=-f\sin\?\theta d\phi.
\label{eq:star1}\end{align}
The definitions \eqref{eq:star1} imply that
\begin{equation}
  \label{eq:starstar}
  \star\,\star=-1 \qquad \text{when applied to one-forms.}
\end{equation}

For two-dimensional manifolds like the sphere, the co-differentiation operator $\delta$ is defined on forms via 
\cite[p. 200 Equation (2)]{MR722297}
\begin{equation}\label{eq:delta-def}
  \delta[\alpha]:=-*d*.
\end{equation}
This implies that co-differentiation maps forms of degree $k$ to forms of degree $k-1$, and in particular
\begin{equation}\label{eq:delta-f-0}
 \delta[f]=0.
\end{equation}
Calculations using \eqref{eq:delta-def} and the definitions of $\star$ and $d$ yield
\begin{equation}\label{eq:delta}
\begin{aligned}
  \delta[f d\phi\wedge d\theta]&= \tppp{f}{\theta}d\phi-f \cot\?\theta d\phi-\csc^2\!\theta \tppp{f}{\phi}d\theta   ,
\\
  \delta[f d\phi+h d\theta]&=-h \cot\?\theta-\tppp{h}{\theta} -\csc^2\!\theta \,\tppp{f}{\phi}.
\end{aligned}
\end{equation}
Formulas \eqref{eq:d1}, \eqref{eq:d0}, and \eqref{eq:delta} imply that the composition of either $d$ or $\delta$ with itself vanishes identically:
\begin{equation}\label{eq:dd0}
 d\, d=0= \delta\,\delta.
\end{equation}

The Hodge Laplacian is the operator \cite[p. 200 Equation (3)]{MR722297}
\begin{equation}
  \label{eq:hodge}
  \Delta_H\eqdef d \delta+\delta d=-(\star\, d\star d+d \star d\, \star),
\end{equation}
which takes forms of any degree into forms of the same degree. The second formula in \eqref{eq:hodge} follows from
\eqref{eq:delta-def}. Calculations using \eqref{eq:hodge}, \eqref{eq:d1}\textendash\eqref{eq:d2}, and
\eqref{eq:star20}\textendash\eqref{eq:star1} yield
\begin{align}
\label{eq:hodge0}
  \Delta_H f&=-\tpppp{f}{\theta}-\csc^2\!\theta\, \tpppp{f}{\phi}-\cot \?\theta \tppp{f}{\theta},
\\[3pt]
\label{eq:hodge1}
  \Delta_H[f d\phi+h d\theta]&=\lp -\tpppp{f}{\theta}-\csc^2\!\theta\,\tpppp{f}{\phi}+\cot\?\theta\, \tppp{f}{\theta}-2\cot\?\theta\,\tppp{h}{\phi}\rp d\phi
\\&\quad+\lp-\tpppp{h}{\theta}-\csc^2\!\theta\,\tpppp{h}{\phi}-\cot\?\theta\,\tppp{h}{\theta}+2\cot\?\theta\csc^2\!\theta\,\tppp{f}{\phi}
+h\csc^2\!\theta\rp d\theta
\end{align}

\subsection{Differential operators on vector fields}

By using the musical operators it is possible to translate the above differential operators on forms into differential operators on vector fields. 
Define the following operators:
\begin{align}
\label{eq:divcurl}
  \div \myvec u&\eqdef \star\, d \star \flat[\myvec u],
&
\curl \myvec u&\eqdef \star\, d\,\flat[\myvec u],
\\
\label{eq:gradperp}
\grad f&\eqdef \sharp\,d [f],
&
\myvec u^\perp&\eqdef -\sharp\star\flat[\myvec u], 
&
\grad^\perp f&\eqdef (\grad f)^\perp =-\sharp\star d[f],
\\
\label{eq:laplacian}
\Delta \myvec u&\eqdef \grad\div \myvec u-\grad^\perp\curl \myvec u,
&
\Delta f&\eqdef \div\grad f.
\end{align}
In view of \eqref{eq:flatsharp}, \eqref{eq:starstar}, \eqref{eq:dd0}, and \eqref{eq:music}, the definitions \eqref{eq:divcurl} and \eqref{eq:gradperp}
imply that
\begin{equation}
  \label{eq:divgradperp0}
  \div\grad^\perp =0 \qquad\text{and}\qquad (\myvec u^\perp)^\perp =-\myvec u.
\end{equation}
The definition of the Laplacian on functions agrees with the definition of the Hodge Laplacian $\Delta_H$ except for its sign, i.e.,
\begin{equation}\label{eq:lap=hodge:f}
  \Delta f=-\Delta_H f,
\end{equation}
and the Laplacian on vector fields is similarly compatible with the Hodge Laplacian and musical isomorphisms, i.e.
\begin{equation}
  \label{eq:lap=hodge:u}
  \Delta \myvec u= -\sharp\Delta_H\flat [\myvec u],
\end{equation}
as can be seen by using \eqref{eq:hodge}, \eqref{eq:delta-f-0}, and \eqref{eq:divcurl}\textendash\eqref{eq:laplacian}.
Combining \eqref{eq:divcurl} with \eqref{eq:flat}, \eqref{eq:star1}, and \eqref{eq:d1} yields the formula
\begin{align}
  \label{eq:div}
\div (u_\phi \myvec e_\phi+u_\theta \myvec e_\theta)&
=\tppp{u_\phi}{\phi}+\tfrac1{\sin\?\theta}\prt_\theta (\sin\?\theta\, u_\theta)
\end{align}
for the divergence of a vector field on the sphere, 
combining \eqref{eq:gradperp} with \eqref{eq:d0} and \eqref{eq:sharp} yields
the formula
\begin{equation}
  \label{eq:grad}
  \grad f=\tfr{\sin^2\!\theta}\tppp{f}{\phi}\myvec e_\phi+\tppp{f}{\theta} \myvec e_\theta
\end{equation}
for the gradient of a function on the sphere, and 
combining \eqref{eq:lap=hodge:u}, \eqref{eq:hodge1}, and \eqref{eq:flat}\textendash\eqref{eq:sharp} shows that the Laplacian of a vector field is
\begin{equation}
  \label{eq:lapu}
\begin{aligned}
  \Delta(u \myvec e_\phi+v \myvec e_\theta)&=
\lb \tpppp{u}{\theta}+\csc^2\!\theta\, \tpppp{u}{\phi}+3\cot\?\theta\, \tppp{u}{\theta}+2\cot\?\theta\csc^2\!\theta\, \tppp{v}{\phi} -2u\rb \myvec e_\phi
\\&\quad+\lb \tpppp{v}{\theta}+\csc^2\!\theta\,\tpppp{v}{\phi}+\cot\?\theta\,\tppp{v}{\theta}
-2\cot\?\theta\,\tppp{u}{\phi}-\csc^2\!\theta\, v\rb \myvec e_\theta.
\end{aligned}
\end{equation}
Also, combining \eqref{eq:flat}, \eqref{eq:star1}, and \eqref{eq:sharp} shows that the perpendicular of a
vector on the sphere is given by
\begin{equation}
  \label{eq:uperp}
  (u_{\phi}\myvec e_\phi+u_\theta \myvec e_\theta)^\perp=\tfr{\sin\?\theta} \,u_\theta \myvec e_\phi-\sin\?\theta\,u_\phi \myvec e_\theta,
\end{equation}
which by \eqref{eq:g} implies that
\begin{equation}
  \label{eq:uperpperpu}
  g(\myvec u,\myvec u^\perp)=0.
\end{equation}
It will sometimes be useful to consider $\perp$ as an operator that can be applied to vectors, which in view of \eqref{eq:gradperp} and
\eqref{eq:uperp} has the form
\begin{equation}
  \label{eq:perp}
  \perp\eqdef -\sharp\star\flat =\lp\begin{smallmatrix} \phantom{-}0 &\tfr{\sin\?\theta}\\-\sin\?\theta&0\end{smallmatrix}\rp.
\end{equation}

\subsection{Integration and adjoints}

According to the standard theory of integration on manifolds, the integral of a function on the sphere of a measurable function $f$ is 
\begin{equation}
  \label{eq:int-on-S}
\int_{\mathbb S^2} f\eqdef   \int_{\mathbb S^2} f(\phi,\theta) \sqrt{\det g}\, d\phi\wedge d\theta
=\int_{\theta=0}^\pi\int_{\phi=0}^{2\pi} f(\phi,\theta)\sin\?\theta \,d\phi\,d\theta,
\end{equation}
where $g$ with no arguments denotes the matrix of metric tensor components appearing in \eqref{eq:sphereip}.
Using once more the metric tensor $g(\cdot,\cdot)$ defined in~\eqref{eq:veclen}\textendash\eqref{eq:sphereip},
we can therefore define an $L^2$ inner product on pairs~$\lp\begin{smallmatrix}\myvec u\\h\end{smallmatrix}\rp$ consisting of a vector and a scalar
by
\begin{equation}
  \label{eq:ip}
\begin{aligned}
  &\ip{\lp\begin{smallmatrix}\myvec u\\h\end{smallmatrix}\rp}{\lp\begin{smallmatrix}\myvec v\\k\end{smallmatrix}\rp}
\eqdef \int_{\mathbb S^2} [g(\myvec u,\myvec v)+hk]\sqrt{\det g}\,d\phi\wedge d\theta
\\&=
 \int_{\theta=0}^\pi\int_{\phi=0}^{2\pi}  \lb \sin^2\!\theta \,u_\phi(\phi,\theta)v_\phi(\phi,\theta)+
  u_\theta(\phi,\theta)v_\theta(\phi,\theta)+h(\phi,\theta)k(\phi,\theta)\rb \sin\?\theta \,d\phi\,d\theta.
\end{aligned}
\end{equation}
An operator $L$ then has a formal adjoint operator $L^*$ if
\begin{equation}
  \label{eq:adjcond}
  \ip{L\lp\begin{smallmatrix}\myvec u\\h\end{smallmatrix}\rp}{\lp\begin{smallmatrix}\myvec v\\k\end{smallmatrix}\rp}
\equiv \ip{\lp\begin{smallmatrix}\myvec u\\h\end{smallmatrix}\rp}{L^*\lp\begin{smallmatrix}\myvec v\\k\end{smallmatrix}\rp}.
\end{equation}
As usual, an operator is symmetric if $L^*=L$ when acting on smooth functions, and is antisymmetric if $L^*=-L$ for such functions.

Calculations using  \eqref{eq:g}, \eqref{eq:div}, \eqref{eq:int-on-S}, and \eqref{eq:ip}
 show that for any smooth scalar field~$k$ and vector field~$\myvec v$,
$\int_{\mathbb S^2}\div \myvec v=0$ and
\begin{equation}
  \label{eq:divstargrad}
  \ip{\lp\begin{smallmatrix}\myvec v\\k\end{smallmatrix}\rp}{\lp\begin{smallmatrix}\grad h\\\div \myvec v\end{smallmatrix}\rp}=
\int_{\mathbb S^2}\div(k\myvec v)=0.
\end{equation}

\bibliographystyle{plain}
\bibliography{sphere}
\end{document}